\documentclass[10pt]{amsart}
\usepackage{amsfonts,amssymb,amscd,amsmath,enumerate,verbatim,calc}
\usepackage{color,soul}
\usepackage{mathtools}
\usepackage{graphicx}
\everymath{\displaystyle}
\usepackage{fancyhdr}
\usepackage{tikz}
\usetikzlibrary{shapes.geometric, arrows}
\tikzstyle{arrow} = [thick,->,>=stealth]
\tikzstyle{process} = [rectangle, minimum width=4cm, minimum height=2cm, text centered, text width=2.5cm, draw=green, fill=green!10]
\textwidth=15cm
\textheight=20.5cm
\topmargin=0.5cm
\oddsidemargin=0.5cm
\evensidemargin=0.5cm
\pagestyle{plain}

\newcommand{\Hom}{\text{Hom}}
\newcommand{\ran}{\text{Im }}
\newcommand{\rank}{\text{rank }}

\newcommand{\IFF}{\text{if and only if}}

\newcommand{\R}{\mathbb{R} }
\newcommand{\C}{\mathbb{C} }

\theoremstyle{plain}

\newtheorem{theorem}{Theorem}[section]
\newtheorem{corollary}[theorem]{Corollary}

\newtheorem{proposition}[theorem]{Proposition}

\theoremstyle{definition}
\newtheorem{definition}[theorem]{Definition}

\newtheorem{remark}[theorem]{Remark}
\newtheorem{example}[theorem]{Example}

\begin{document}
\title{Matrix representations of linear transformations on bicomplex space}
\author{Anjali}
\email{2019vsasmaf01@gbu.ac.in}
\address{Department of Applied Mathematics, Gautam Buddha University, Greater Noida, Uttar Pradesh 201312, India}

\author{Fahed~Zulfeqarr}
\email{fahed@gbu.ac.in}
\address{Department of Applied Mathematics, Gautam Buddha University, Greater Noida, Uttar Pradesh 201312, India}

\author{Akhil Prakash}
\email{gm5537@myamu.ac.in}
\address{Department of Mathematics, Aligarh Muslim University, Aligarh, Uttar Pradesh 202002, India}
\author{Prabhat Kumar}
\email{prabhatphilosopher@gmail.com}
\address{Department of Applied Mathematics, Gautam Buddha University, Greater Noida, Uttar Pradesh 201312, India}

\keywords{Bicomplex Numbers, Conjugation, Vector Space and Linear transformation.}
\subjclass[IMS]{Primary 15A04, 15A30; Secondary 30G35}
\date{\today}
	
\begin{abstract}
An algebraic investigation on bicomplex numbers is carried out here. Particularly matrices and linear maps defined on them are discussed. A new kind of cartesian product, referred to as an idempotent product, is introduced and studied. The elements of this space are linear maps of a special form. These linear maps are examined with respect to usual notions like kernel, range, and singularity. Their matrix representation is also discussed. 
\end{abstract}
\maketitle

\section*{Introduction} 
The theory of bicomplex numbers has been a thrust area of current research in mathematics. It has evolved a lot in the recent past. Several researchers (see \cite{alpay2023interpolation, alpay2014basics,  cerroni2017theory, futagawa1928, futagawa1932, riley1953, ringleb1933,  rochon2004, srivastava2003, srivastava2008}) have contributed a lot to the field. They have been working in different directions to analyze their properties and to create concepts consistent with a unified approach to the multivariate theory of complex numbers. In this process, some concepts like bicomplex topology \cite {srivastava2011}, differentiability and analyticity of bicomplex function \cite{price2018}, power series,  bicomplex matrices, bicomplex Riemann zeta function and  Dirichlet series, integrability, Cauchy’s theory, etc., have been established with necessary modifications. Also, we have followed some of the results and symbols from the existing literature on the fundamental concepts from the book \cite{price2018} on bicomplex functions.

In section 1, bicomplex numbers and some basic algebraic structures built on them are introduced and discussed. The section also stresses the role of the idempotent representation of bicomplex numbers in their study. A new kind of cartesian product, known as an idempotent product, is defined and discussed. The space becomes a central object of investigation. In section 2, we have revised bicomplex complex linear operator and also we have defined bicomplex linear transformation in this section.

Section 3 deals with matrices and linear maps defined on bicomplex space. It connects bicomplex matrices with elements of an idempotent product. The last section focuses on how the elements of the idempotent product behave for kernel, range, invertibility, and singularity. It renders some results of their matrix representation.

\section{Preliminaries and Notations} 
This section introduces bicomplex numbers. It deals with basic notions such as idempotent representation and the cartesian product of bicomplex space. It presents some fundamental results on bicomplex numbers.

\noindent {\bf Bicomplex numbers:}
A bicomplex number is an element of the form : 
\[
\xi = {u}_{1}  + {i}_{1} {u}_{2}+ {i}_{2} {u}_{3} + {i}_{1} {i}_{2} {u}_{4},\; \;{u}_{k}\in \R, \;\;1\leqslant k\leqslant 4, \;\; \mbox{with}\;\; \;{i}_{1} {i}_{2} \ = \ {i}_{2} {i}_{1},\; \; {i}_{1}^2 = {i}_{2}^2  =  -1. 
\]
The set of all bicomplex numbers is denoted by $\C_{2}$ and is referred to as the bicomplex space. The symbols $\mathbb{C}_{1},\; \mathbb{C}_{0}$, for convenience, denote the set of all complex numbers and the set of all real numbers respectively. The bicomplex space $\C_2$ can be described in the following two ways:
\begin{eqnarray*}
\mathbb{C}_{2} &\coloneqq & \{{u}_{1}  + {i}_{1} {u}_{2}  + {i}_{2} {u}_{3}  + {i}_{1} {i}_{2} {u}_{4}\;:\; {u}_{1} , {u}_{2} , {u}_{3} , {u}_{4} \in\mathbb{C}_{0}\}, and \\
 \mathbb{C}_{2} &\coloneqq & \{{z}_{1} + {i}_{2} {z}_{2}\;:\;  {z}_{1}, {z}_{2} \in \mathbb{C}_{1}\}.
\end{eqnarray*}
 $\C_2$ contains zero-divisors and hence it is not a field but an algebra over $\C_1$. There are exactly four idempotent elements in $\C_2$, viz., $0,1,e_1,e_2$ where $e_1,e_2$ represent two nontrivial idempotent elements defined as follows:
 \[ e_{1} \coloneqq \frac{(1 + i_{1} i_{2})}{2}\quad  \mbox{and} \quad e_{2} \coloneqq \frac{(1 - i_{1} i_{2})}{2}.\]
Notice that \;$e_{1} + e_{2} =1$\; and \;$e_{1}e_{2}  = e_{2}e_{1} = 0$. Linear independence of these elements over $\C_1$ gives rise to a new representation of all bicomplex numbers known as idempotent representation.

\noindent{\bf Idempotent representation of bicomplex numbers:}
Every bicomplex number $\xi = {z}_{1}+ {i}_{2} {z}_{2}$ can  be uniquely represented as the complex combination of elements $e_{1}$ and $e_{2}$  in the following form
\[
\xi =  (z_{1} -  i_{1} z_{2}) e_{1} + (z_{1}  +  i_{1} z_{2}) e_{2},
\]
where complex numbers $(z_{1} -  i_{1} z_{2})$ and $(z_{1} +  i_{1} z_{2})$ are called idempotent components of $\xi$ and will be denoted by $\xi^-$ and $\xi^+$ respectively. Therefore the number $\xi = {z}_{1}+ {i}_{2} {z}_{2}$ can also be written as $\xi = \xi^- e_{1} + \xi^+ e_{2}$, where $\xi^-  = z_{1} - i_{1} z_{2}$ and  $\xi^+ = z_{1}  + i_{1} z_{2}$. 
\begin{remark}
The idempotent representation of product of elements $\xi, \eta\in\C_2$ can be seen easily to be as:
\[
\xi\cdot\eta = \left(\xi^- \eta^-\right)e_{1} + \left(\xi^+ \eta^+\right)e_{2}.
\]
\end{remark}

\noindent{\bf Cartesian product:}
The $n$-times cartesian product of $\C_2$ is represented by $\C_2^n$ and consisting of all $n$-tuples of bicomplex numbers of the form \;$(\xi_{1}, \xi_{2},\ldots,\xi_{n})$,\;where $\xi_{i} \in \C_2 ;\; i = 1,2,\ldots,n$. That is,
\[
{\C}_{2}^{n} \eqqcolon  \{(\xi_{1}, \xi_{2},\ldots,\xi_{n}) : \;\xi_{i} \in \C_2; \;  i = 1,2,\ldots,n\}.
\]
Furthermore, using idempotent representation, we can also express every element of $\C_2^n$ uniquely as:
\[
(\xi_{1}, \xi_{2},\ldots,\xi_{n}) =  (\xi_{1}^-, \xi_{2}^-,\ldots, \xi_{n}^-)e_{1}  +  (\xi_{1}^+,  \xi_{2}^+,\ldots,\xi_{n}^+)e_{2}
\]
such that $(\xi_{1}^-, \xi_{2}^-,\ldots, \xi_{n}^-)$ and $(\xi_{1}^+,  \xi_{2}^+,\ldots,\xi_{n}^+)$ are  n-tuples of complex numbers from the space $\C_1^n$.

\begin{remark} Some remarks can be easily made here.
\begin{enumerate}
\item Comparison between elements of $\C_2^{n}$ can be done by the following rules :
\begin{eqnarray*}
(\xi_{1}, \xi_{2},\ldots,\xi_{n})=(\eta_{1}, \eta_{2},\ldots, \eta_{n}) &\Longleftrightarrow & \xi_{i} = \eta_{i}  \quad \forall \; i\\
 &\Longleftrightarrow & \xi^{-}_{i} = \eta^{-}_{i}\;\& \;\; \xi^{+}_{i} = \eta^{+}_{i}\;\; \forall \; i\\
 i.e., \;\; (\xi_{1}, \xi_{2},\ldots,\xi_{n})=(\eta_{1}, \eta_{2},\ldots, \eta_{n}) &\Longleftrightarrow &  \xi^{-}_{i} = \eta^{-}_{i}\;\& \;\; \xi^{+}_{i} = \eta^{+}_{i}\;\; \forall \; i.
\end{eqnarray*}
\item 
 Generally, we know that if $F'$ is a sub-field of a field F then $F^n(F')$ forms a vector space. We have the result that $\C_2$ is not a field so the above result does not give us any proof that the $\C_2^{n}(\C_1)$ is a vector space. It is not hard to see that $\C_2^{n}$ forms a vector space over $\C_1$ with respect to usual addition and scalar multiplication. This immediately yields that the dimension of $\C_2^n$ over $C_1$ is $2n$, i.e., we have
\[
 \dim (\C_2^{n})=2n = 2 \cdot \dim(\C_1^n).
\]

\item Furthermore for an element $(\xi_{1},\xi_{2},\ldots,\xi_{n})\in \C_2^{n}$, we define a scalar product in $\C_2^n$  with elements $e_1,\;e_2$ by the rules: 
\begin{eqnarray*}
e_{1}\cdot (\xi_{1},\xi_{2},\ldots,\xi_{n}) = (\xi_{1},\xi_{2},\ldots,\xi_{n}) e_1  &\eqqcolon & (\xi_{1} e_{1}, \xi_{2} e_{1},\ldots, \xi_{n} e_{1})  = (\xi_{1}^{-} e_{1}, \xi_{2}^{-} e_{1},\ldots, \xi_{n}^{-} e_{1}), and\\
e_{2}\cdot (\xi_{1},\xi_{2},\ldots,\xi_{n})  = (\xi_{1},\xi_{2},\ldots,\xi_{n}) e_2 &\eqqcolon & (\xi_{1} e_{2}, \xi_{2} e_{2},\ldots, \xi_{n} e_{2}) = (\xi_{1}^{+} e_{2}, \xi_{2}^{+} e_{2},\ldots, \xi_{n}^{+} e_{2}).  
\end{eqnarray*}
It follows that naturally extends to a scalar product in $\C_2^n$ over $\C_2$ and hence to a product in $\C_2^n$ as follows:
\begin{eqnarray*}
\eta \cdot (\xi_{1},\xi_{2},\ldots,\xi_{n}) &=& (\eta \xi_{1},\eta \xi_{2},\ldots,\eta \xi_{n}), and\\
(\xi_{1},\xi_{2},\ldots,\xi_{n}) \cdot (\eta_{1},\eta_{2},\ldots,\eta_{n})&=&(\xi_1\eta_1,\cdots,\xi_n\eta_n).
\end{eqnarray*}
It makes $\C_2^n$ not just to be a $\C_2$-module but also to be a $\C_1$-algebra.
\end{enumerate}
\end{remark} 
 \noindent{\bf Cartesian idempotent product:}
A new kind of cartesian product in bicomplex spaces is hereby introduced which will be referred to as idempotent product which will be dealt and analysed in later part of this article. This helps us to switch from complex case to bicomplex case.\\
\indent Formally speaking, for any subsets \;$S_1,S_2 \subseteq \C_1^{n}$, their idempotent product, denoted as $S_1\times_e S_2$ is defined to be a subset of $\C_2^{n}$ given as
\begin{eqnarray}\label{idem.def}
S_1 \times_e S_2 \eqqcolon \{e_1 x + e_2 y : x \in S_1, y \in S_2\}. 
\end{eqnarray}
In the same manner, for any subsets \;$H_1,H_2 \subseteq \C_2^{m \times n}$, their idempotent product, denoted as $H_1\times_e H_2$ is defined to be a subset of $\C_2^{m \times n}$ given as
\begin{eqnarray}\label{idemmatrix.def}
H_1 \times_e H_2 \eqqcolon \{e_1 A + e_2 B : A \in H_1, B \in H_2\}. 
\end{eqnarray}

Similarly for any non-empty subsets $A_1,A_2$ of the space of \;$\C_1$-linear maps, $\Hom(\C_1^n,\C_1^m)$, their
idempotent product, written as $A_1\times_e A_2$, is defined to be a subset of $\Hom(\C_2^n,\C_2^m)$ given as
\begin{eqnarray}
A_1 \times_e A_2 \eqqcolon \{e_1 T_1 + e_2 T_2 : \;T_1 \in A_1, T_2 \in A_2\}. 
\end{eqnarray}
This induces a new formulation of bicomplex space $\C_2^n$ as\; $\C_2^n=\C_1^n\times_e \C_1^n$, for all $n\geqslant 1$. Also we get $\C_2^{m \times n}=\C_1^{m \times n}\times_e\C_1^{m \times n}$. We will later give a meaning to the elements of $A_1 \times_e A_2$.
\section{Bicomplex linear operator and bicomplex linear transformation}
In this section, we summarize bicomplex linear operator and we refer the reader to \cite{colombo2014bicomplex, kumar2015bicomplex} for further details. Also, we have defined bicomplex linear transformation. Let $V$ be a module over bicomplex space and Let $T \colon V  \to V $ be a map such that 
\begin{eqnarray*}
 T(u + v) = T(u) + T(v) \quad \mbox{and} \quad T(\alpha v) = \alpha T(v) \quad \forall \alpha \in \C_2  \quad \mbox{and} \quad u, v \in V. 
\end{eqnarray*}
Then we say that $T$ is a bicomplex linear operator on $V$. The set $End(V)$ of linear operator on $V$ forms a bicomplex - module by defining 
\begin{eqnarray*}
 (T +S)(v) &=& T(v) + S(v) \quad \forall \quad T,S \in End(V)    \\
(\alpha T)(v) &=& \alpha (T(v)) \quad \forall \quad T \in End(V), \alpha \in \C_2
\end{eqnarray*}
Let us set $V_1 = e_1 V = \{e_1 v : v \in v\}$. Any element $v \in V$ can be written as
\begin{eqnarray*}
    v = (e_1 + e_2)v = e_1 v + e_2 v = v_1 + v_2 = e_1 v_1 + e_2 v_2, \quad \mbox{where} \quad v_1 = e_1 v,  v_2 =e_2 v
\end {eqnarray*}
It is evident that $V = V_1 \oplus V_2$. We can define the operators $T_1 , T_2$ as 
\begin{eqnarray*}
  T_1(v) = e_1 T(v), T_2(v) = e_2 T(v)  
\end{eqnarray*}
where $T_1 \colon V  \to V_1 $ and $T_2 \colon V  \to V_2 $
\begin{remark}
    The following properties hold
    \begin{enumerate}
\item The operators $T_1$ and $T_2$ are bicomlex linear
\item we get the decomposition $T = e_1 T_1 + e_2 T_2$
\item The action $T$ on $V$ can be decomposed as fellows\\
$T(v) = e_1 T_1(v_1) + e_2 T_2(v_2)$, where $v = e_1 v_1 + e_2 v_2 \in V$
\item The operator $T_k \colon V_k  \to V_k ; k = 1,2$
\end{enumerate}
\end{remark}
\begin{definition}\label{definition2.2}(Bicomplex linear transformation)
If $V$ and $V'$ is a module over $\C_2$ and let $T \colon V  \to V'$ be a map such that 
\begin{eqnarray*}
    T(u + v) &=& T(u) + T(v)\\
    T(\alpha u) &=& \alpha T(u) \quad \forall \quad \alpha \in \C_2 \quad \mbox{and} \quad u,v \in V
\end{eqnarray*}
then we say that $T$ is a bicomplex linear transformation from V to V'    
\end{definition}
\indent As previously, we can define the sets $V_1, V_2, V'_1, V'_2$ and the linear transformation $T_1$ and $T_2$ here as well
\[
V_1 = e_1 V ,\quad V_2 = e_2 V , \quad  V'_1 = e_1 V' \quad \mbox{and} \quad V'_2 = e_2 V'
\]
\indent Any element $v \in V$ and $v' \in V'$ can be written as 
\[
v = e_1 v_1 + e_2 v_2 , \quad v' = e_1 v'_1 + e_2 v'_2 \quad \mbox{where} \quad  v_i = e_i v , v'_i = e_i v' \forall i = 1,2
\]
\indent Also it is evident that $V = V_1 \oplus V_2$ and $V' = V'_1 \oplus V'_2$. The linear transformation  $T_1 \colon V  \to   V'_1$ and $T_2 \colon V  \to   V'_2$ will be 
\[
T_1(v) = e_1 T(v) \quad \mbox{and} \quad T_2(v) = e_2 T(v)
\]
\begin{remark}
    We can easily prove the following properties
\begin{enumerate}
\item The linear transformation $T_1$ and $T_2$ are bicomplex linear.
\item We get the decomposition $T = e_1 T_1 + e_2 T_2$
\item The action $T$ on $V$ can be decomposed as fellows
\[
T(v) = e_1 T_1(v_1) + e_2 T_2(v_2); \quad v = e_1 v_1 + e_2 v_2 \in V
\]
\item The linear transformation $T_k \colon V_k  \to   V'_k; k =1,2 $
\end{enumerate}
\end{remark}
\indent particularly if we set $V = \C_2^n$ and $V' = \C_2^m$ then $V_1 = e_1 \C_1^n, V_2 = e_2 \C_1^n$ and 
$V'_1 = e_1 \C_1^m, V'_2 = e_2 \C_1^m$. In this case $T \colon \C_2^n  \to  \C_2^m $ is a bicomplex linear transformation. Also, $T_1 \colon \C_2^n  \to  e_1 \C_1^m$ and $T_2 \colon \C_2^n  \to  e_2 \C_1^m$ specially, $T_1 \colon e_1 \C_1^n  \to  e_1 \C_1^m$  and $T_2 \colon e_2 \C_1^n  \to  e_2 \C_1^m$ are bicomplex linear transformation.

\begin{remark}
    If $(\xi_{1}, \xi_{2},\ldots,\xi_{n}) \in \C_2^n$ and 
\begin{eqnarray}
     T(\xi_{1}, \xi_{2},\ldots,\xi_{n}) =(\eta_{1}, \eta_{2},\ldots,\eta_{m})   
\end{eqnarray}
As $T$ is linear with respect to bicomplex numbers then 
\begin{eqnarray*}
e_1 T(\xi_{1}^-, \xi_{2}^-,\ldots,\xi_{n}^-) +  e_2 T(\xi_{1}^+, \xi_{2}^+,\ldots,\xi_{n}^+) = e_1 (\eta_{1}^-, \eta_{2}^-,\ldots,\eta_{m}^-) + e_2 (\eta_{1}^+, \eta_{2}^+,\ldots,\eta_{m}^+)
\end{eqnarray*}
\begin{eqnarray}
 \Rightarrow T(\xi_{1}^-, \xi_{2}^-,\ldots,\xi_{n}^-) = (\eta_{1}^-, \eta_{2}^-,\ldots,\eta_{m}^-)   
\end{eqnarray}

That is
\begin{eqnarray}
e_1 T(\xi_{1}^-, \xi_{2}^-,\ldots,\xi_{n}^-) &=& e_1 (\eta_{1}^-, \eta_{2}^-,\ldots,\eta_{m}^-)\\
\mbox{and} \quad T[e_1 (\xi_{1}^-, \xi_{2}^-,\ldots,\xi_{n}^-)] &=& e_1 (\eta_{1}^-, \eta_{2}^-,\ldots,\eta_{m}^-)
\end{eqnarray}
Also, by using definition \ref{definition2.2}, $(6)$ and $(7)$, we have
\begin{eqnarray}
T_1[e_1 (\xi_{1}^-, \xi_{2}^-,\ldots,\xi_{n}^-)] &=& e_1 T[e_1 (\xi_{1}^-, \xi_{2}^-,\ldots,\xi_{n}^-)] \nonumber \\
&=& e_1 e_1 (\eta_{1}^-, \eta_{2}^-,\ldots,\eta_{m}^-) \nonumber \\
\therefore \quad T_1[e_1 (\xi_{1}^-, \xi_{2}^-,\ldots,\xi_{n}^-)] &=& e_1 (\eta_{1}^-, \eta_{2}^-,\ldots,\eta_{m}^-) \nonumber \\
&=&e_1 T(\xi_{1}^-, \xi_{2}^-,\ldots,\xi_{n}^-)\nonumber\\
&=& T_1(\xi_{1}^-, \xi_{2}^-,\ldots,\xi_{n}^-) \nonumber \\
&=& e_1 (\eta_{1}^-, \eta_{2}^-,\ldots,\eta_{m}^-) 
\end{eqnarray}
Now
\begin{eqnarray}
T_1(\xi_{1}, \xi_{2},\ldots,\xi_{n}) &=& e_1 T (\xi_{1}, \xi_{2},\ldots,\xi_{n}) \nonumber\\
&=& e_1 (\eta_{1}, \eta_{2},\ldots,\eta_{m}) \nonumber\\
\therefore \quad T_1(\xi_{1}, \xi_{2},\ldots,\xi_{n}) &=& e_1 (\eta_{1}^-, \eta_{2}^-,\ldots,\eta_{m}^-)
\end{eqnarray}
$\therefore$ from $(6), (7), (8) $ and (9)
\begin{eqnarray*}
    T_1(\xi_{1}, \xi_{2},\ldots,\xi_{n}) &=& e_1 (\eta_{1}^-, \eta_{2}^-,\ldots,\eta_{m}^-) \\
    &=& e_1 T(\xi_{1}^-, \xi_{2}^-,\ldots,\xi_{n}^-)\\
    &=& T [e_1 (\xi_{1}^-, \xi_{2}^-,\ldots,\xi_{n}^-)]\\
    &=& T_1 [e_1 (\xi_{1}^-, \xi_{2}^-,\ldots,\xi_{n}^-)]\\
    &=& T_1 (\xi_{1}^-, \xi_{2}^-,\ldots,\xi_{n}^-)
\end{eqnarray*}
Similarly 
\begin{eqnarray*}
    T_2(\xi_{1}, \xi_{2},\ldots,\xi_{n}) &=& e_2(\eta_{1}^+, \eta_{2}^+,\ldots,\eta_{m}^+) \\
    &=& e_2 T(\xi_{1}^+, \xi_{2}^+,\ldots,\xi_{n}^+)\\
    &=& T [e_2 (\xi_{1}^+, \xi_{2}^+,\ldots,\xi_{n}^+)]\\
    &=& T_2 [e_2 (\xi_{1}^+, \xi_{2}^+,\ldots,\xi_{n}^+)]\\
    &=& T_2 (\xi_{1}^+, \xi_{2}^+,\ldots,\xi_{n}^+)
\end{eqnarray*}
\end{remark}
\indent It is worth noting here that if $T \colon \C_2^n  \to  \C_2^m$ is a bicomplex linear transformation then we get the bicomplex linear transformation $T_1 \colon e_1 \C_1^n  \to  e_1 \C_1^m$ and $T_2 \colon e_2 \C_1^n  \to  e_2 \C_1^m$ and vice versa. This notation is possible because $T$ is linear with respect to bicomplex numbers. Furthermore, we have defined another map in definition $3.2$ for which this notion is not true because the map define in there is not linear with respect to bicomplex numbers.

\section{Bicomplex Matrices and Linear transformations}
This section focuses on bicomplex matrices and linear maps defined on the space $\C_2^n$. It connects both these notions like the ordinary ones on complex numbers.

\indent A bicomplex matrix of order $m\times n$ is denoted by \;$[\xi_{i j}]_{m \times n}$, $\xi_{i j} \in \C_2$\; or simply by $[\xi_{i j}]$ if there is no confusion.  We let the set of all bicomplex matrices of order $m \times n$ be denoted by $\C_2^{m \times n}$, i.e., we have
\begin{eqnarray}
\C_2^{m \times n}  \eqqcolon  \Big\{[\xi_{i j}] : \; \xi_{i j} \in \C_2 ;\; i = 1,2,\ldots,m,\; j = 1,2,\ldots,n \Big\}.
\end{eqnarray} 
The usual matrix addition and scalar multiplication makes the space  $\C_2^{m \times n}$ to be a vector space over the field $\C_1$. Further each bicomplex matrix $[\xi_{i j}]$ can be uniquely written as
\begin{eqnarray}
 [\xi_{i j}]=e_1 \ [\xi^-_{i j}] \ + \ e_2 \  [\xi^+_{i j}],
\end{eqnarray}
where  $[\xi^-_{i j}],\;[\xi^+_{i j}]$ are complex matrices of order $m\times n$. From this, it follows easily that
\begin{eqnarray} \label{2mn}
\dim(\C_2^{m \times n})=  2 \cdot \dim (\C_1^{m \times n}) = 2mn.
\end{eqnarray}

\noindent We now come to linear maps, i.e., linear maps on bicomplex spaces. As we know that $\Hom(\C_1^{n},\C_1^{m})$ represents the set of all $\C_1$-linear maps from $\C_1^{n}$ to $\C_1^{m}$ so does $\Hom(\C_2^{n},\C_2^{m})$. 
However we simplify these notations to our comfort.

\begin{remark} \label{rem1}
For brevity and convenience, we denote the set of all $\C_1$-linear maps from $\C_1^{n}$ to $\C_1^{m}$ by $L_1^{nm}$, instead of $\Hom(\C_1^{n},\C_1^{m})$ and set  of all $\C_1$-linear maps from $\C_2^{n}$ to $\C_2^{m}$ by $L_2^{nm}$.  Clearly both $L_1^{nm}$ and  $L_2^{nm}$ are vector spaces over $\C_1$. Hence, we can see that 
\begin{eqnarray}\label{4mn}
\dim(L_1^{nm}) = m n \;\;\mbox{and}\;\; \dim(L_2^{nm}) = \dim C_2^{n}\cdot \dim C_2^{m} = 2n\cdot 2m = 4mn. 
\end{eqnarray}

It is evident that $L_1^{nm}$=$\Hom(\C_1^{n},\C_1^{m})$ is isomorphic to $\C_1^{m \times n}$ as $\C_1$ is a field but here we have $\C_2$ is not a field so we cannot say that $L_2^{nm}$=$\Hom(\C_2^{n},\C_2^{m})$ and $\C_2^{m \times n}$ are isomorphic. (\ref{2mn}) and (\ref{4mn}) follow that $L_2^{nm}$ is not isomorphic to $\C_2^{m \times n}$. Moreover $\C_2^{m \times n}$ is a proper subspace of $L_2^{nm}$ in isomorphic sense. So we try to find a subset of $L_2^{nm}$ which is isomorphic to $\C_2^{m \times n}$. Thus the following definition.
\end{remark}

\begin{definition}\label{product}
For any given $T_{1}, T_{2} \in L_1^{nm}$, we define a map $f \colon \C_2^{n}  \to \C_2^{m} $  by the following rule:
\[
f(\xi_{1}, \xi_{2},\ldots,\xi_{n}) \eqqcolon e_{1}\cdot {T}_{1} (\xi_{1}^-, \xi_{2}^-,\ldots,\xi_{n}^-)
+ e_{2}\cdot {T}_{2} (\xi_{1}^+, \xi_{2}^+,\ldots,\xi_{n}^+).
\]
One can notice that \;$f$\; is a $\C_1$-linear map. This \;$f$\; is defined as $e_{1} T_{1} + e_{2} T_{2}$. Therefore, the set of all such linear maps is the idempotent product $L_1^{nm} \times_{e} L_1^{nm}$, i.e., we have
\begin{eqnarray}\label{idemproduct}
L_1^{nm} \times_{e} L_1^{nm} &\eqqcolon & \left \{\;e_{1} T_{1} + e_{2} T_{2} \in L_2^{nm}:\; \;T_{1}, T_{2} \in L_1^{nm}\right \}.  
\end{eqnarray}
\end{definition}
\begin{figure}[h]
	\centering
	\begin{tikzpicture}[node distance=2.5cm]
		\node (alc)[process,xshift=2cm,draw=black, fill=white!20] {$(\xi_1^-,\xi_2^-, \cdots,\xi_n^-)\in \C_1^n$};
		\node (bob)[process, right of=alc, xshift=5cm,draw=black, fill=white!20] {$T_1(\xi_1^-,\xi_2^-,\cdots,\xi_n^-)\in \C_1^m$};
		\node (iob1) [process,draw=black, below of=bob,xshift=-7.5cm, yshift=-0.5cm] {$(\xi_1,\xi_2, \cdots,\xi_n)\in \C_2^n$}; 
   \node (iob2) [process, draw=black, below of=bob,xshift=.1cm, yshift=-0.5cm]{$e_1 T_1(\xi_1^-,\xi_2^-,\cdots,\xi_n^-)$ \\ 
    + 
   $e_2 T_2(\xi_1^+,\xi_2^+,\cdots,\xi_n^+)$
        \\ $\in L_1 \times_e L_1$};	
		\node (b1)[process, below of=alc, yshift=-3.5cm, draw=black, fill=white!20] {$(\xi_1^+,\xi_2^+, \cdots,\xi_n^+)\in \C_1^n$};
		\node (b2)[process, below of=bob, yshift=-3.5cm, draw=black, fill=white!20] {$T_2(\xi_1^+,\xi_2^+, \cdots,\xi_n^+)\in \C_1^m$};
			\draw[->] (iob1)-- (alc);
			\draw[->] (alc)-- node[anchor=south] {$T_1$} (bob);
			\draw[->] (bob)-- (iob2);
			\draw[->] (iob1)--node[anchor=south] {$e_1T_1+e_2T_2$} (iob2);
				\draw[->] (iob1)-- (b1);
			\draw[->] (b1)-- node[anchor=south] {$T_2$} (b2);
			\draw[->] (b2)-- (iob2);
	\end{tikzpicture}
\caption{Diagram showing the idempotent components of  $T_{1}$ and $T_{2}$ which are compatible with the idempotent components $(\xi_{1}^-, \xi_{2}^-,\ldots,\xi_{n}^-)$ and $(\xi_{1}^+, \xi_{2}^+,\ldots,\xi_{n}^+)$}
\end{figure}
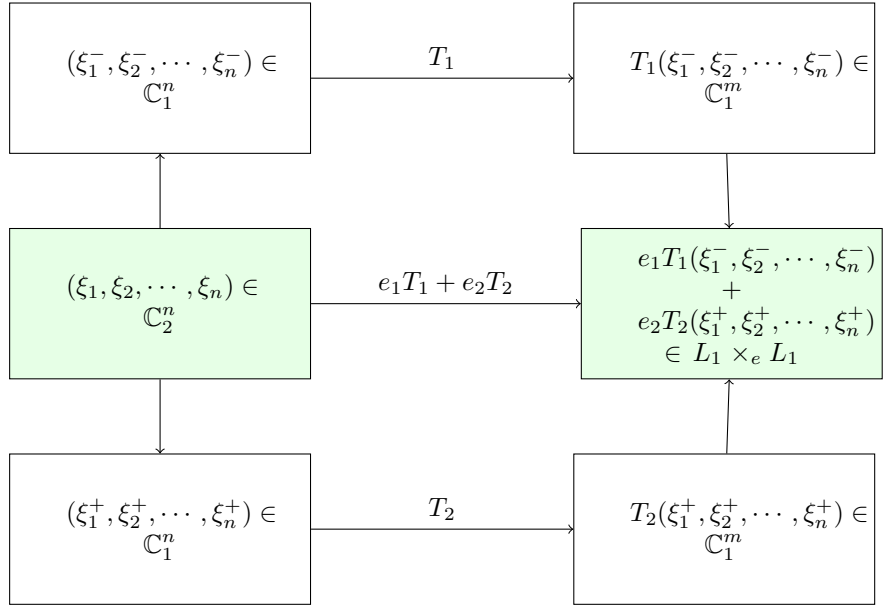

\noindent It is a routine matter to check that the idempotent product $L_1^{nm} \times_{e} L_1^{nm}$ is a subspace of $L_2^{nm}$. This follows easily from the following proposition.

\begin{proposition} \label{Proposition1}
Let $T, S \in L_{1}^{nm} \times_{e} L_{1}^{nm}$ be any elements such that $T = e_{1} T_{1} + e_{2} T_{2}$ and $S = e_{1} S_{1} + e_{2} S_{2}$. Then, we have 
\begin{enumerate}
\item $T + S = e_{1} (T_{1} + S_{1}) + e_{2} (T_{2} + S_{2})$
\item $\alpha T = e_{1} (\alpha T_{1}) + e_{2} (\alpha T_{2}); \quad \forall \alpha \in \C_1$
\end{enumerate}
\end{proposition}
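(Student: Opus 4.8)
The plan is to verify the two identities directly from Definition \ref{product}, using the uniqueness of the idempotent representation established in Remark 1.3 (and the comparison rules in the subsequent Remark) to conclude that the decompositions on the right-hand sides are the correct ones. First I would recall that for any element $T \in L_1^{nm} \times_e L_1^{nm}$, writing $T = e_1 T_1 + e_2 T_2$ means precisely that, for every $(\xi_1,\ldots,\xi_n) \in \C_2^n$,
\[
T(\xi_1,\ldots,\xi_n) = e_1 T_1(\xi_1^-,\ldots,\xi_n^-) + e_2 T_2(\xi_1^+,\ldots,\xi_n^+),
\]
and similarly for $S = e_1 S_1 + e_2 S_2$. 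The key computational fact needed is that passing to idempotent components is additive and $\C_1$-homogeneous: $(\xi + \eta)^{\pm} = \xi^{\pm} + \eta^{\pm}$ and $(\alpha\xi)^{\pm} = \alpha\xi^{\pm}$ for $\alpha \in \C_1$, which follows immediately from the explicit formula $\xi^{\pm} = z_1 \pm i_1 z_2$. Componentwise this gives the analogous statements in $\C_1^n$.

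For part (1), I would evaluate $(T+S)$ at an arbitrary point $(\xi_1,\ldots,\xi_n)$, use the definition of pointwise addition of linear maps, substitute the two idempotent expressions for $T$ and $S$, and then collect the $e_1$-terms and $e_2$-terms separately using $e_1^2 = e_1$, $e_2^2 = e_2$, $e_1 e_2 = 0$. This yields
\[
(T+S)(\xi_1,\ldots,\xi_n) = e_1\bigl(T_1(\xi^-) + S_1(\xi^-)\bigr) + e_2\bigl(T_2(\xi^+) + S_2(\xi^+)\bigr),
\]
where $\xi^{\pm}$ abbreviates the tuple of idempotent components. Since $T_1, S_1 \in L_1^{nm}$ their sum $T_1 + S_1$ is again a $\C_1$-linear map $\C_1^n \to \C_1^m$, and likewise $T_2 + S_2$; so the right-hand side is exactly the element $e_1(T_1+S_1) + e_2(T_2+S_2)$ of the idempotent product in the sense of Definition \ref{product}. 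Part (2) is handled identically: evaluate $(\alpha T)$ at a point, pull the scalar $\alpha \in \C_1$ inside (noting $\alpha$ commutes with $e_1, e_2$ since $e_1, e_2 \in \C_2$ and scalars from $\C_1$ are central), and use $(\alpha\xi)^{\pm} = \alpha \xi^{\pm}$ together with $\C_1$-linearity of $T_1$ to rewrite $\alpha T_1(\xi^-) = T_1(\alpha\xi^-)$ if desired, though it suffices to recognize $\alpha T_1$ as an element of $L_1^{nm}$.

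There is essentially no obstacle here — the statement is a bookkeeping lemma ensuring that the maps $T \mapsto (T_1, T_2)$ assembling and disassembling elements of $L_1^{nm} \times_e L_1^{nm}$ respect the vector-space operations, which is what makes $L_1^{nm} \times_e L_1^{nm}$ a subspace of $L_2^{nm}$. The only point requiring a word of care is the implicit uniqueness: one should note that the decomposition $T = e_1 T_1 + e_2 T_2$ with $T_i \in L_1^{nm}$ is unique, so that the right-hand expressions in (1) and (2) genuinely identify the components of $T+S$ and $\alpha T$; this uniqueness follows because applying $e_1 \cdot$ (resp. $e_2 \cdot$) to $T(\xi^- e_1 + \xi^+ e_2)$ recovers $e_1 T_1(\xi^-)$ (resp. $e_2 T_2(\xi^+)$), and then $e_1 x = e_1 y$ in $\C_2^m$ forces $x = y$ in $\C_1^m$ by the comparison rule. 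I would close by remarking that (1) and (2) together show $L_1^{nm} \times_e L_1^{nm}$ is closed under addition and $\C_1$-scalar multiplication, hence is a subspace of $L_2^{nm}$, as claimed before the proposition.
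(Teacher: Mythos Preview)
Your proposal is correct; the paper states this proposition without proof, treating both identities as routine consequences of Definition~\ref{product} and using them only to observe that $L_1^{nm}\times_e L_1^{nm}$ is a subspace of $L_2^{nm}$. Your pointwise verification is exactly the intended argument, and your remark on uniqueness of the decomposition is a worthwhile addition even though the paper does not spell it out.
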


\noindent The following theorem contains some basic properties of the elements of  $L_{1}^{nm} \times_{e} L_{1}^{nm}$.
\begin{theorem}\label{properties}{\bf (Properties)} 
Let $T = e_{1} T_{1} + e_{2} T_{2},\; S = e_{1} S_{1} + e_{2} S_{2}$ be any two elements of $L_{1}^{nm} \times_{e} L_{1}^{nm}$. Then, we have
\begin{enumerate}
\item $T = 0$ \IFF \; $T_{1} = 0, T_{2} = 0$
\item $T = S$ \IFF  \; $T_{1} = S_{1}, T_{2} = S_{2}$
\item $S\circ T = e_{1} (S_{1} \circ T_{1}) + e_{2} (S_{2} \circ T_{2})$, wherever composition defined.
 \end{enumerate}
 \end{theorem}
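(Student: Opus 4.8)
The plan is to reduce all three parts to two elementary facts about the idempotent calculus already available in the paper: the uniqueness of the idempotent decomposition of elements of $\C_2^m$ (the comparison rule stated earlier for tuples of bicomplex numbers), and the surjectivity of the map $\xi = (\xi_1,\ldots,\xi_n)\mapsto \bigl((\xi_1^-,\ldots,\xi_n^-),(\xi_1^+,\ldots,\xi_n^+)\bigr)$ from $\C_2^n$ onto $\C_1^n\times\C_1^n$; indeed, for arbitrary $x,y\in\C_1^n$ the element $\xi := e_1 x + e_2 y$ has idempotent components exactly $x$ and $y$. Throughout I will abbreviate $\xi^- := (\xi_1^-,\ldots,\xi_n^-)$ and $\xi^+ := (\xi_1^+,\ldots,\xi_n^+)$, so that by Definition \ref{product} one has $T(\xi) = e_1 T_1(\xi^-) + e_2 T_2(\xi^+)$, and similarly for $S$.

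For (1), one implication is immediate: if $T_1 = 0$ and $T_2 = 0$ then $T(\xi) = e_1 T_1(\xi^-) + e_2 T_2(\xi^+) = 0$ for every $\xi\in\C_2^n$, so $T = 0$. For the converse I would assume $T = 0$ and evaluate: for every $\xi$ we get $e_1 T_1(\xi^-) + e_2 T_2(\xi^+) = e_1\cdot 0 + e_2\cdot 0$, whence uniqueness of the idempotent representation in $\C_2^m$ forces $T_1(\xi^-) = 0$ and $T_2(\xi^+) = 0$; then letting $\xi$ vary so that $\xi^-$, and independently $\xi^+$, sweeps out all of $\C_1^n$ (by the surjectivity above) yields $T_1 = 0$ and $T_2 = 0$.

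Part (2) I would then deduce formally from (1): by Proposition \ref{Proposition1} the difference $T - S$ again lies in $L_1^{nm}\times_e L_1^{nm}$ and equals $e_1(T_1 - S_1) + e_2(T_2 - S_2)$, so $T = S$ iff $T - S = 0$ iff (by (1)) $T_1 - S_1 = 0$ and $T_2 - S_2 = 0$, i.e. iff $T_1 = S_1$ and $T_2 = S_2$. For (3), assuming the dimensions make $S\circ T$ meaningful, I would fix $\xi\in\C_2^n$, set $\eta := T(\xi) = e_1 T_1(\xi^-) + e_2 T_2(\xi^+)$, and compare with $\eta = e_1\eta^- + e_2\eta^+$ to read off $\eta^- = T_1(\xi^-)$ and $\eta^+ = T_2(\xi^+)$ by uniqueness; applying the defining rule of $S$ then gives $(S\circ T)(\xi) = S(\eta) = e_1 S_1(\eta^-) + e_2 S_2(\eta^+) = e_1(S_1\circ T_1)(\xi^-) + e_2(S_2\circ T_2)(\xi^+)$, which is precisely the value at $\xi$ of $e_1(S_1\circ T_1) + e_2(S_2\circ T_2)$; since $\xi$ is arbitrary, the claimed identity follows, and it also exhibits $S\circ T$ as a member of the relevant idempotent product.

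All of this is routine; the only place needing any care is the bookkeeping in (3) — correctly identifying the idempotent components of the intermediate vector $T(\xi)$ before feeding it into $S$ — together with the point in (1) that one must genuinely use the surjectivity of $\xi\mapsto(\xi^-,\xi^+)$ and not merely evaluate at a single argument. I do not anticipate any substantive obstacle.
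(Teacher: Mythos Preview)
Your argument is correct. The paper itself states this theorem without proof, treating all three parts as routine consequences of the idempotent calculus; your write-up supplies exactly those details --- uniqueness of the idempotent decomposition in $\C_2^m$, surjectivity of $\xi\mapsto(\xi^-,\xi^+)$ for part (1), reduction via Proposition~\ref{Proposition1} for part (2), and the component identification $\eta^- = T_1(\xi^-)$, $\eta^+ = T_2(\xi^+)$ for part (3) --- and there is nothing to compare against beyond noting that your approach is the natural one the authors evidently had in mind.
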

\noindent The next theorem deals with the dimension of $L_{1}^{nm} \times_{e} L_{1}^{nm}$.
\begin{theorem}\label{dim1}
The dimension of $L_{1}^{nm} \times_{e} L_{1}^{nm}$ is equal to \;$2\cdot \dim L_{1}^{nm}$
\end{theorem}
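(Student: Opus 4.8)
The plan is to construct an explicit $\C_1$-linear isomorphism from the external direct product $L_1^{nm}\times L_1^{nm}$ onto the idempotent product $L_1^{nm}\times_e L_1^{nm}$; once this is in hand the dimension count is immediate, since $\dim(L_1^{nm}\times L_1^{nm}) = 2\dim L_1^{nm}$ and isomorphic $\C_1$-vector spaces have equal dimension, all the spaces involved being finite-dimensional ($\dim L_1^{nm} = mn$ by Remark \ref{rem1}).

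First I would define $\Phi\colon L_1^{nm}\times L_1^{nm} \to L_1^{nm}\times_e L_1^{nm}$ by $\Phi(T_1,T_2) \coloneqq e_1 T_1 + e_2 T_2$. This is well defined and lands in the stated codomain directly from Definition \ref{product} (equivalently \eqref{idemproduct}). That $\Phi$ is $\C_1$-linear is exactly the content of Proposition \ref{Proposition1}: part (1) gives additivity, $\Phi(T_1+S_1,T_2+S_2) = \Phi(T_1,T_2) + \Phi(S_1,S_2)$, and part (2) gives homogeneity, $\Phi(\alpha T_1,\alpha T_2) = \alpha\,\Phi(T_1,T_2)$ for all $\alpha\in\C_1$. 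Surjectivity of $\Phi$ is built into the definition of the idempotent product, since every element of $L_1^{nm}\times_e L_1^{nm}$ is by definition of the form $e_1 T_1 + e_2 T_2$. For injectivity it suffices to check that $\ker\Phi$ is trivial: if $\Phi(T_1,T_2) = e_1 T_1 + e_2 T_2 = 0$, then Theorem \ref{properties}(1) forces $T_1 = 0$ and $T_2 = 0$, so $(T_1,T_2) = (0,0)$. Hence $\Phi$ is a $\C_1$-linear isomorphism and $\dim(L_1^{nm}\times_e L_1^{nm}) = 2\dim L_1^{nm}$.

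Equivalently---and this is how I would phrase it if an explicit basis is preferred---one fixes a $\C_1$-basis $\{B_1,\dots,B_k\}$ of $L_1^{nm}$ (so $k = mn$) and checks that $\{e_1 B_1,\dots,e_1 B_k,\ e_2 B_1,\dots,e_2 B_k\}$ is a basis of $L_1^{nm}\times_e L_1^{nm}$. Spanning follows from Definition \ref{product} together with Proposition \ref{Proposition1}: an arbitrary $e_1 T_1 + e_2 T_2$ is obtained by writing $T_1 = \sum_i a_i B_i$, $T_2 = \sum_i b_i B_i$ and using linearity. Linear independence follows because a relation $\sum_i a_i(e_1 B_i) + \sum_i b_i(e_2 B_i) = e_1\bigl(\sum_i a_i B_i\bigr) + e_2\bigl(\sum_i b_i B_i\bigr) = 0$ forces $\sum_i a_i B_i = 0$ and $\sum_i b_i B_i = 0$ by Theorem \ref{properties}(1), and then all $a_i,b_i$ vanish by independence of the $B_i$.

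There is no serious obstacle here: the statement is essentially a bookkeeping consequence of the two facts already established, namely the $\C_1$-linearity of the correspondence $(T_1,T_2)\mapsto e_1T_1+e_2T_2$ (Proposition \ref{Proposition1}) and the faithfulness of the idempotent decomposition (Theorem \ref{properties}(1)). The only point deserving a word of care is that all dimensions in play are finite, so that ``isomorphic implies equal dimension'' and the basis-counting argument are legitimate; this is guaranteed by $\dim L_1^{nm} = mn < \infty$.
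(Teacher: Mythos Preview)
Your proposal is correct and essentially matches the paper's own proof: the paper fixes a basis $\{T_j\}_{j=1}^{mn}$ of $L_1^{nm}$, forms $\{e_iT_j\}$, and verifies spanning and linear independence using Proposition~\ref{Proposition1} and Theorem~\ref{properties}(1), which is exactly your ``explicit basis'' variant. Your isomorphism $\Phi$ is just a repackaging of the same two ingredients, so there is no substantive difference.
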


\begin{proof}
As we know that $\dim L_{1}^{nm} = m n$. So we take $\mathcal B_{1} = \{T_{j}:\;j=1,2,\cdots,mn\}$ as a basis for $L_{1}^{nm}$. This yields a collection $\mathcal B_{2} = \{ e_{i} T_{j}:i=1,2\;\&\;j=1,2,\cdots,mn\}$ of elements of $L_{1}^{nm} \times_{e} L_{1}^{nm}$. We assert that $\mathcal B_{2}$ is a basis for $L_{1}^{nm} \times_{e} L_{1}^{nm}$. For any $S\in L_{1}^{nm} \times_{e} L_{1}^{nm}$ there are $S_{1}, S_{2} \in L_{1}^{nm}$ such that $S=e_1S_1+e_2S_2$. Since \;$<{\mathcal B_{1}}> = L_{1}^{nm}$, there exist $\alpha_j, \beta_{j} \in \C_1 ;\; j = 1,2,\ldots,mn$ such that 
\begin{eqnarray*}
S_{1} = \sum_{j=1}^{mn} \alpha_{j} T_{j} &\mbox{and}& S_{2} = \sum_{j=1}^{mn} \beta_{j} T_{j}\\
\Rightarrow\quad S = \sum_{j=1}^{mn} e_{1}(\alpha_{j} T_{j}) + \sum_{j=1}^{mn} e_{2}(\beta_{j} T_{j}) &=& \sum_{j=1}^{mn} \alpha_{j} (e_{1} T_{j}) + \sum_{j=1}^{mn} \beta_{j} (e_{2} T_{j})\\
\Rightarrow \quad S \in <\mathcal B_{2}> \;  \;&\Rightarrow &\;<\mathcal B_{2}>\; = L_{2} \times_{e} L_{2}.
\end{eqnarray*}
Furthermore, if we consider
\[
\sum_{j=1}^{mn} \alpha_{j} (e_{1} T_{j} ) + \sum_{j=1}^{mn} \beta_{1} (e_{2} T_{i}) = 0  \quad \Rightarrow  \quad e_{1} \sum_{j=1}^{mn} (\alpha_{j} T_{j}) + e_{2} \sum_{j=1}^{mn} (\beta_{j} T_{j}) = 0.
\]
Using property (1) of  theorem \ref{properties}, we get
\[
\sum_{j=1}^{mn} (\alpha_{j} T_{j}) = 0 \quad \& \quad \sum_{j=1}^{mn} (\beta_{1} T_{j}) = 0 \; \Rightarrow \; \alpha_{j} = 0=\beta_{j} \;\;\forall\; j.
\]
Hence, $\mathcal B_{2}$ is linearly independent set. Hence the result follows.
\end{proof}

\begin{remark} \label{remark}
(\ref{4mn}) and theorem \ref{dim1} show that $L_1^{nm} \times_{e} L_1^{nm}$ is a proper subspace of $L_2^{nm}$.
It is clear that $\C_2^{m \times n} = \C_1^{m \times n} \times_e \C_1^{m \times n}$ and $\C_2^n = \C_1^n \times_e \C_1^n$. But $\Hom(\C_2^n , \C_2^m) \neq \Hom(\C_1^n ,\C_1^m) \times_e \Hom(\C_1^n ,\C_1^m)$. In fact, $\Hom(\C_1^n ,\C_1^m) \times_e \Hom(\C_1^n ,\C_1^m) \subset \Hom(\C_2^n , \C_2^m)$. This will help us to defining one - one correspondence between bicomplex matrix and linear map.
\end{remark}

\section{Relationship between bicomplex linear map and matrix}
\noindent The studying of $\C_1$-linear maps is equivalent to that of complex matrices. Is the same true for a bicomplex matrix? In this section we work on it in the context of bicomplex matrix. Let us suppose that $V$ and $V'$ be two vector space of dimension $n$ and $m$ respectively, over the field $F$. Also we have $\mathcal{B}$ and $\mathcal{B'}$ are two ordered bases of vector space $V$ and $V'$ respectively. Then the vector spaces ${\Hom ({V,V'})}(F)$ and $F^{m \times n}(F)$ are isomorphic two each other. If $T$ is any linear map in $\Hom ({V,V'})$ then the matrix representation of $T$ with respect two ordered bases $\mathcal{B}$ and $\mathcal{B'}$ corresponds to a unique matrix in $F^{m \times n}(F)$ and vice-versa. 
This unique correspondence help us to solving system of equation and  to find rank, nullity , eigen value ,eigen vector , characteristic polynomial and minimal polynomial. 

If we assume that the field is a set of complex number then for any complex matrix of order $m \times n$ there will be a unique linear map from vector spaces $V$ to $V'$. The vector space $V$ and $V'$ can be substitute by vector space $\C_1^n $ and $\C_1^m$ because of $V$ and $\C_1^n $, $V'$ and $\C_1^m $ have same dimension. So whenever we want to study on complex matrix of order $m \times n $ then we can study its corresponding linear map $T \colon \C_1^n  \to \C_1^m$ to get the information about concern matrix. This analogous  concept can not be define for a bicomplex matrix of order $m \times n $ and the linear map $T \colon \C_2^n  \to \C_2^m$. As we know $\C_2$ is not a field so a matrix representation of any linear map can not give a bicomplex matrix and \ref{rem1} shows that $L_2^{nm}$ is not isomorphic to $\C_2^{m \times n}$. In the same manner the matrix representation of the linear map $T \colon \C_2^n  \to \C_2^m$ will be a complex matrix of order $2m \times 2n$. So using the traditional approach of a matrix representation of a linear map we can not develop the analogous concept for a bicomplex matrix and the linear map $T$.

To do away this problem we define a new approach called "Idempotent method" for matrix representation of a linear map.

\begin{definition}\label{methodl}(Idempotent method)
Let $\mathcal{B}_{1}, \mathcal B_{2}$ be the ordered bases for $\C_1^{n}$ and $\C_1^{m}$ respectively. Then for any linear map $T \in L_{1}^{nm} \times_{e} L_{1}^{nm}$ so that $T = e_{1} T_{1} + e_{2} T_{2}$ for some $T_1,T_2\in L_1^{nm}$, the matrix representation of $T$ with respect to these bases is defined as 
\begin{eqnarray}
[T]^{\mathcal{B}_{1}}_{\mathcal{B}_{2}}\eqqcolon e_1 [T_1]^{\mathcal{B}_{1}}_{\mathcal{B}_{2}} + e_{2} [T_2]^{\mathcal{B}_{1}}_{\mathcal{B}_{2}}.
\end{eqnarray}
This new approach will help us to find the solution of a system of equations of bicomplex variables and we can define the rank, nullity, eigenvalue, eigenvector, characteristic polynomial, and minimal polynomial for a bicomplex matrix. Here we have not used $T$ as defined in definition $2.2$. The reason behind this is that the $T_1$ and $T_2$ corresponding to $T$ defined in definition $2.2$ are bicomplex linear transformations from  $e_1 \C_1^n  \to  e_1 \C_1^m$ and  $e_2 \C_1^n  \to  e_2 \C_1^m$ respectively. So, the definition $2.2$ is not more useful for our study.
\end{definition}

\begin{remark}
In the special case when $\C_1^{n} = \C_1^{m}$, the matrix representation of the operator $T= e_{1} T_{1} + e_{2} T_{2}$ with respect to basis $\mathcal{B}$ for $\C_1^{n}$ is simplified to  $[T]_{\mathcal{B}}$ from $[T]^{\mathcal{B}}_{\mathcal{B}}$. Thus, we have
\begin{eqnarray}\label{matrixrepre}
[T]_{\mathcal{B}} = e_{1} [T_{1}]_{\mathcal{B}} + e_{2} [T_{2}]_{\mathcal{B}}.
\end{eqnarray}
\end{remark}

\begin{example}
Let $T_1, T_2  \colon \C_1^{2} \to \C_1^{3}$ be linear maps defined as 
\[
T_1(a,b)=(a, a +b, b)\quad\mbox{and}\quad T_2(a,b) = (a-b, b, a)\quad\forall\;(a,b)\in \C_1^2.
\]
Let $\mathcal{B}_{1} = \{(1,1),(1,0)\}$ and $\mathcal{B}_{2}=\{(1,0,1),(1,1,0),(0,0,1)\}$ be the ordered bases for  $\C_1^{2}$ and $\C_1^{3}$ respectively. This gives 
\begin{eqnarray*}
\left[T_{1}\right]^{\mathcal{B}_{1}}_{\mathcal{B}_{2}} = \begin{bmatrix}
-1 & 0 \\ 2 & 1 \\2 & 0\end{bmatrix} &\mbox{and}& \left[T_{2}\right]^{\mathcal{B}_{1}}_{\mathcal{B}_{2}}= \begin{bmatrix} -1 & 1 \\ 1 & 0 \\ 2 & 0 \end{bmatrix}\\
\Rightarrow \quad e_{1} [T_{1}]^{\mathcal{B}_{1}}_{\mathcal{B}_{2}} + e_{2} [T_{2}]^{\mathcal{B}_{1}}_{\mathcal{B}_{2}} &=& e_1\begin{bmatrix}
-1 & 0 \\ 2 & 1 \\2 & 0 \end{bmatrix} + e_2 \begin{bmatrix}
-1 & 1 \\ 1 & 0 \\ 2 & 0 \end{bmatrix} =\begin{bmatrix}
-1 & e_2 \\ 2 e_1 +  e_2 & e_1 \\ 2 & 0 \end{bmatrix}.
\end{eqnarray*}
Therefore, the matrix representation of linear transformation $T = e_1 T_1 + e_2 T_2$ with respect to $\mathcal{B}_{1},\;\mathcal{B}_{2}$ is given by
\begin{center}
$[T]^{\mathcal{B}_{1}}_{\mathcal{B}_{2}} = 
\begin{bmatrix}
-1 & e_2 \\ 2 e_1 +  e_2 & e_1 \\ 2 & 0 \end{bmatrix}$.
\end{center}
\end{example}

Next we show that the matrix representation as defined in (\ref{methodl}) behaves well with respect to both the operations in $L_{1}^{nm} \times_{e} L_{1}^{nm}$.
\begin{theorem}
Let $T = e_{1} T_{1} + e_{2} T_{2}, \ S = e_{1} S_{1} + e_{2} S_{2}$ be any two elements of $L_{1}^{nm} \times_{e} L_{1}^{nm}$ and let $\mathcal{B}_{1}, \mathcal{B}_{2}$ be the ordered bases of\; $\C_1^{n}$ and $\C_1^{m}$ respectively. Then 
\begin{enumerate}
\item 
$[T + S]^{\mathcal{B}_{1}}_{\mathcal{B}_{2}} = [T]^{\mathcal{B}_{1}}_{\mathcal{B}_{2}} + [S]^{\mathcal{B}_{1}}_{\mathcal{B}_{2}}$
\item
$[\alpha T]^{\mathcal{B}_{1}}_{\mathcal{B}_{2}} = \alpha [T]^{\mathcal{B}_{1}}_{\mathcal{B}_{2}} \forall \ \alpha \in \C_1$
\end{enumerate}
\end{theorem}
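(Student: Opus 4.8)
The plan is to reduce both identities to the well-known additivity and homogeneity of the matrix representation of $\C_1$-linear maps, using Proposition \ref{Proposition1} to track the idempotent decomposition and Definition \ref{methodl} to pass back and forth between a map in $L_1^{nm}\times_e L_1^{nm}$ and its matrix. In short, the theorem is the assertion that the classical $\C_1$-linear isomorphism $L_1^{nm}\to\C_1^{m\times n}$ extends ``coordinatewise'' along the decomposition $e_1(\cdot)+e_2(\cdot)$.

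For part (1), I would begin with Proposition \ref{Proposition1}(1), which gives $T+S = e_1(T_1+S_1)+e_2(T_2+S_2)$; since $T_1+S_1,\,T_2+S_2\in L_1^{nm}$, this exhibits $T+S$ as an element of $L_1^{nm}\times_e L_1^{nm}$ with idempotent components $T_1+S_1$ and $T_2+S_2$. Definition \ref{methodl} then yields
\[
[T+S]^{\mathcal{B}_1}_{\mathcal{B}_2} = e_1 [T_1+S_1]^{\mathcal{B}_1}_{\mathcal{B}_2} + e_2 [T_2+S_2]^{\mathcal{B}_1}_{\mathcal{B}_2}.
\]
Next I invoke the ordinary additivity of the matrix representation for $\C_1$-linear maps, $[T_i+S_i]^{\mathcal{B}_1}_{\mathcal{B}_2} = [T_i]^{\mathcal{B}_1}_{\mathcal{B}_2} + [S_i]^{\mathcal{B}_1}_{\mathcal{B}_2}$ for $i=1,2$ (this is exactly where the field structure of $\C_1$ enters). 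Substituting, distributing $e_1$ and $e_2$ over the sums in $\C_2^{m\times n}$, and then regrouping the $e_1$-terms and the $e_2$-terms, I recover $\big(e_1[T_1]^{\mathcal{B}_1}_{\mathcal{B}_2}+e_2[T_2]^{\mathcal{B}_1}_{\mathcal{B}_2}\big) + \big(e_1[S_1]^{\mathcal{B}_1}_{\mathcal{B}_2}+e_2[S_2]^{\mathcal{B}_1}_{\mathcal{B}_2}\big)$, which by Definition \ref{methodl} applied separately to $T$ and to $S$ equals $[T]^{\mathcal{B}_1}_{\mathcal{B}_2}+[S]^{\mathcal{B}_1}_{\mathcal{B}_2}$. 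The regrouping step is legitimate because of the unique decomposition $[\xi_{ij}]=e_1[\xi^-_{ij}]+e_2[\xi^+_{ij}]$ in $\C_2^{m\times n}$ together with $e_1+e_2=1$ and $e_1e_2=0$.

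For part (2), the argument is structurally identical: Proposition \ref{Proposition1}(2) gives $\alpha T = e_1(\alpha T_1)+e_2(\alpha T_2)$ with idempotent components $\alpha T_1,\alpha T_2\in L_1^{nm}$ (here we use $\alpha\in\C_1$), Definition \ref{methodl} turns this into $e_1[\alpha T_1]^{\mathcal{B}_1}_{\mathcal{B}_2}+e_2[\alpha T_2]^{\mathcal{B}_1}_{\mathcal{B}_2}$, and the classical homogeneity $[\alpha T_i]^{\mathcal{B}_1}_{\mathcal{B}_2}=\alpha[T_i]^{\mathcal{B}_1}_{\mathcal{B}_2}$, combined with pulling the scalar $\alpha\in\C_1$ past $e_1$ and $e_2$ (valid since $\alpha$ commutes with the idempotents), gives $\alpha\big(e_1[T_1]^{\mathcal{B}_1}_{\mathcal{B}_2}+e_2[T_2]^{\mathcal{B}_1}_{\mathcal{B}_2}\big)=\alpha[T]^{\mathcal{B}_1}_{\mathcal{B}_2}$.

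I do not expect a genuine obstacle here; the result is essentially bookkeeping. The one point that deserves care — the closest thing to a subtlety — is ensuring that every scalar being moved around lies in $\C_1$, so that both Proposition \ref{Proposition1} and the classical linearity of matrix representation are actually applicable, and that the final reassembly of $e_1$- and $e_2$-components is justified by uniqueness of the idempotent decomposition in $\C_2^{m\times n}$; both facts are already in place from the earlier sections.
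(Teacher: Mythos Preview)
Your proposal is correct and follows essentially the same route as the paper's own proof: both invoke Proposition~\ref{Proposition1} to identify the idempotent components of $T+S$ and $\alpha T$, apply Definition~\ref{methodl}, use the classical linearity of the $\C_1$-matrix representation on each component, and regroup. If anything, your write-up is slightly more explicit about why the regrouping and scalar-commutation steps are valid.
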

\begin{proof}
By definition \ref{methodl} and using proposition \ref{Proposition1}, it follows that
\begin{eqnarray*}
[T + S]^{\mathcal{B}_{1}}_{\mathcal{B}_{2}} &=& e_{1} [T_{1} + S_{1}]^{\mathcal{B}_{1}}_{\mathcal{B}_{2}} + e_{2} [T_{2} + S_{2}]^{\mathcal{B}_{1}}_{\mathcal{B}_{2}}\\
&=& e_{1} 
{\left([T_{1}]^{\mathcal{B}_{1}}_{\mathcal{B}_{2}}+[S_{1}]^{\mathcal{B}_{1}}_{\mathcal{B}_{2}} \right) + e_{2}\left( [T_{2}]^{\mathcal{B}_{1}}_{\mathcal{B}_{2}} +  [S_{2}]^{\mathcal{B}_{1}}_{\mathcal{B}_{2}} \right)} \\
&=& \left(e_{1} [T_{1}]^{\mathcal{B}_{1}}_{\mathcal{B}_{2}} + e_{2} [T_{2}]^{\mathcal{B}_{1}}_{\mathcal{B}_{2}}\right) + \left(e_{1} [S_{1}]^{\mathcal{B}_{1}}_{\mathcal{B}_{2}} + e_{2} [S_{2}]^{\mathcal{B}_{1}}_{\mathcal{B}_{2}}\right) \\  
&=& [T]^{\mathcal{B}_{1}}_{\mathcal{B}_{2}} + [S]^{\mathcal{B}_{1}}_{\mathcal{B}_{2}}.
\end{eqnarray*}
For the second part, proposition \ref{Proposition1} gives that
\begin{eqnarray*}
[\alpha T]^{\mathcal{B}_{1}}_{\mathcal{B}_{2}} &=& e_{1} [\alpha T_{1}]^{\mathcal{B}_{1}}_{\mathcal{B}_{2}} + e_{2} [\alpha T_{2}]^{\mathcal{B}_{1}}_{\mathcal{B}_{2}} \\
 &=& e_{1} \left(\alpha [T_{1}]^{\mathcal{B}_{1}}_{\mathcal{B}_{2}}\right) + e_{2} \left(\alpha [T_{2}]^{\mathcal{B}_{1}}_{\mathcal{B}_{2}}\right)\\
 &=&  \alpha \left(e_{1} [T_{1}]^{\mathcal{B}_{1}}_{\mathcal{B}_{2}} + e_{2} [T_{2}]^{\mathcal{B}_{1}}_{\mathcal{B}_{2}}\right)  \\ 
&=& \alpha [T]^{\mathcal{B}_{1}}_{\mathcal{B}_{2}}. \hspace{3in} 
\end{eqnarray*}
This completes the proof.
\end{proof}
\begin{theorem}
The space\; $L_{1}^{nm} \times_{e} L_{1}^{nm}$ and \;$\C_2^{m \times n}$ are isomorphic.
\end{theorem}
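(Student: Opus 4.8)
The plan is to upgrade the idempotent-method matrix representation of Definition~\ref{methodl} into the desired isomorphism. Fix once and for all ordered bases $\mathcal{B}_1$ for $\C_1^{n}$ and $\mathcal{B}_2$ for $\C_1^{m}$, and define
\[
\Phi\colon L_1^{nm}\times_e L_1^{nm}\longrightarrow \C_2^{m\times n},\qquad \Phi(e_1T_1+e_2T_2)\eqqcolon e_1[T_1]^{\mathcal{B}_1}_{\mathcal{B}_2}+e_2[T_2]^{\mathcal{B}_1}_{\mathcal{B}_2}=[T]^{\mathcal{B}_1}_{\mathcal{B}_2}.
\]
The first point is that $\Phi$ is well defined: by property (2) of Theorem~\ref{properties}, the pair $(T_1,T_2)$ with $T=e_1T_1+e_2T_2$ is uniquely determined by $T$, and then the classical matrix representations $[T_1]^{\mathcal{B}_1}_{\mathcal{B}_2},[T_2]^{\mathcal{B}_1}_{\mathcal{B}_2}\in\C_1^{m\times n}$ are determined as well, so $[T]^{\mathcal{B}_1}_{\mathcal{B}_2}=e_1[T_1]^{\mathcal{B}_1}_{\mathcal{B}_2}+e_2[T_2]^{\mathcal{B}_1}_{\mathcal{B}_2}$ is a genuine element of $\C_2^{m\times n}$ depending only on $T$.

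Next I would observe that $\Phi$ is $\C_1$-linear; this is exactly the content of the theorem immediately preceding this one, which states $[T+S]^{\mathcal{B}_1}_{\mathcal{B}_2}=[T]^{\mathcal{B}_1}_{\mathcal{B}_2}+[S]^{\mathcal{B}_1}_{\mathcal{B}_2}$ and $[\alpha T]^{\mathcal{B}_1}_{\mathcal{B}_2}=\alpha[T]^{\mathcal{B}_1}_{\mathcal{B}_2}$ for $\alpha\in\C_1$. For injectivity, suppose $\Phi(T)=0$, i.e.\ $e_1[T_1]^{\mathcal{B}_1}_{\mathcal{B}_2}+e_2[T_2]^{\mathcal{B}_1}_{\mathcal{B}_2}=0$ in $\C_2^{m\times n}$; by the uniqueness of the idempotent decomposition $[\xi_{ij}]=e_1[\xi_{ij}^-]+e_2[\xi_{ij}^+]$ of a bicomplex matrix, both complex matrices $[T_1]^{\mathcal{B}_1}_{\mathcal{B}_2}$ and $[T_2]^{\mathcal{B}_1}_{\mathcal{B}_2}$ vanish, and since $S\mapsto[S]^{\mathcal{B}_1}_{\mathcal{B}_2}$ is the classical isomorphism $L_1^{nm}\xrightarrow{\ \sim\ }\C_1^{m\times n}$ recalled in Remark~\ref{rem1}, we get $T_1=T_2=0$, hence $T=0$ by Theorem~\ref{properties}(1).

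For surjectivity, take any $M\in\C_2^{m\times n}$ and write $M=e_1A+e_2B$ with $A,B\in\C_1^{m\times n}$. Using the classical isomorphism, choose $T_1,T_2\in L_1^{nm}$ with $[T_1]^{\mathcal{B}_1}_{\mathcal{B}_2}=A$ and $[T_2]^{\mathcal{B}_1}_{\mathcal{B}_2}=B$; then $T\eqqcolon e_1T_1+e_2T_2\in L_1^{nm}\times_e L_1^{nm}$ satisfies $\Phi(T)=M$. Hence $\Phi$ is a $\C_1$-linear bijection, which proves the theorem. As a sanity check, both spaces have $\C_1$-dimension $2mn$ by Theorem~\ref{dim1} together with~(\ref{2mn}), so either injectivity or surjectivity alone would already close the argument.

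I do not expect a genuine obstacle here; the only care required is to invoke the two uniqueness statements in the right places (uniqueness of the decomposition of a map into idempotent components via Theorem~\ref{properties}, and of a bicomplex matrix via its idempotent representation), and to note that $\Phi$ depends on the chosen bases, so the isomorphism is natural in structure but not canonical. If a stronger statement is wanted in the square case $m=n$, the same map $\Phi$ is moreover a unital $\C_1$-algebra isomorphism, since Theorem~\ref{properties}(3) identifies $S\circ T$ with the componentwise composition $e_1(S_1\circ T_1)+e_2(S_2\circ T_2)$, while ordinary matrix multiplication is compatible with idempotent components by the product rule of Remark~1.
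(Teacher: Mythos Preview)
Your proof is correct and follows the same route as the paper: it defines the map $T\mapsto[T]^{\mathcal{B}_1}_{\mathcal{B}_2}$ and shows it is a $\C_1$-linear bijection, while the paper simply declares this to be ``an easy exercise'' and then invokes the equality of dimensions from (\ref{2mn}) and Theorem~\ref{dim1}. Your version supplies the details the paper omits, but the underlying argument is identical.
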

\begin{proof}

let $T = e_{1} T_{1} + e_{2} T_{2} \in L_1^{nm} \times_e L_1^{nm}$. Define a function $F \colon L_1^{nm} \times_e L_1^{nm} \to \C_2^{m \times n}$ such that 
\[ F(T) = [T]^{\mathcal{B}_{1}}_{\mathcal{B}_{2}}.
\]
It is an easy exercise to show that $F$ is one -one onto linear map.
Moreover in view of (\ref{2mn}) and theorem \ref{dim1}, the vector spaces $\C_2^{m \times n}$ and $L_{1}^{nm} \times_{e} L_{1}^{nm}$ have the same dimension over the field $\C_1$.  So they are isomorphic.
\end{proof}

\noindent As per the definition of idempotent product given in (\ref{idem.def}), we have
\begin{eqnarray}
 \ker T_{1} \times_e \ker T_{2} &\eqqcolon & \{ e_{1} z + e_{2} w\in \C_2^n\;:\; z \in \ker T_{1}, \;w \in \ker T_{2}\},\\
 \ran T_{1} \times_{e} \ran T_{2} &\eqqcolon & \{ e_{1} z + e_{2} w\in \C_2^m\;:\; z \in \ran T_{1}\;w \in \ran T_{2}\}.
\end{eqnarray}
The next theorem connect both kernel and range of of the element of $L_{1}^{nm} \times_{e} L_{1}^{nm}$ with the kernels  and ranges of their components.
\begin{theorem}\label{Kernel}
Let $T \in L_{1}^{nm} \times_{e} L_{1}^{nm}$ so that $T=e_{1} T_{1} + e_{2} T_{2}$ for some $T_1,T_2\in L_1^{nm}$. Then, we have
\begin{enumerate}
\item $\ker(e_{1} T_{1} + e_{2}  T_{2}) = \ker T_{1} \times_{e} \ker T_{2} $
\item $\ran (e_{1} T_{1} + e_{2}  T_{2}) = \ran T_{1} \times_{e} \ran T_{2}  $
\end{enumerate}
\end{theorem}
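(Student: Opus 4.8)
The plan is to compute the action of $T = e_{1}T_{1} + e_{2}T_{2}$ on an arbitrary element of $\C_2^n$ in idempotent coordinates and then to exploit the uniqueness of the idempotent representation in $\C_2^m$ to decouple the two components entirely. First I would fix $\xi = (\xi_1,\ldots,\xi_n) \in \C_2^n$ and set $x = (\xi_1^-,\ldots,\xi_n^-)$ and $y = (\xi_1^+,\ldots,\xi_n^+)$, so that $\xi = e_1 x + e_2 y$ with $x, y \in \C_1^n$, using the identification $\C_2^n = \C_1^n \times_e \C_1^n$. By Definition \ref{product} this gives $T(\xi) = e_1 T_1(x) + e_2 T_2(y)$, and this single identity is what the whole argument rests on.

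For part (1): $\xi \in \ker T$ means $e_1 T_1(x) + e_2 T_2(y) = 0$ in $\C_2^m$. Comparing idempotent components (equivalently, invoking uniqueness of the idempotent representation in $\C_2^m$), this is equivalent to $T_1(x) = 0$ and $T_2(y) = 0$, i.e.\ to $x \in \ker T_1$ and $y \in \ker T_2$. By the defining formula \eqref{idem.def} for the idempotent product this says precisely $\xi \in \ker T_1 \times_e \ker T_2$, and reading the chain of equivalences in both directions yields the asserted set equality.

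For part (2) I would argue two inclusions. For $\subseteq$, any $\eta \in \ran T$ has the form $\eta = e_1 T_1(x) + e_2 T_2(y)$ for some $x, y \in \C_1^n$, and since $T_1(x) \in \ran T_1$ and $T_2(y) \in \ran T_2$ we get $\eta \in \ran T_1 \times_e \ran T_2$. For $\supseteq$, given $e_1 z + e_2 w$ with $z = T_1(x)$ and $w = T_2(y)$ for some $x, y \in \C_1^n$, the element $\xi := e_1 x + e_2 y \in \C_2^n$ satisfies $T(\xi) = e_1 z + e_2 w$, so it lies in $\ran T$.

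I do not expect a genuine obstacle here: the content is essentially bookkeeping. The one point that must be handled honestly is that in both parts the two halves $x$ and $y$ may be chosen independently over $\C_1^n$ — this is exactly the identification $\C_2^n = \C_1^n \times_e \C_1^n$ — together with the uniqueness of the idempotent decomposition in the target $\C_2^m$, which is what makes the forward direction of the kernel statement valid (it is what prevents a nonzero $T_1(x)$ from being cancelled against $e_2 T_2(y)$). Everything else is routine.
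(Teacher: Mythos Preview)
Your proposal is correct and follows essentially the same route as the paper: both arguments expand $T(\xi)$ via Definition~\ref{product} into idempotent components, invoke uniqueness of the idempotent representation in $\C_2^m$ to decouple the kernel condition, and verify the range equality by two inclusions (the paper merely reverses the order in which the two inclusions for part~(2) are treated). Your explicit remark that $x$ and $y$ may be chosen independently in $\C_1^n$ is exactly the point that makes the $\supseteq$ direction of part~(2) work, and the paper uses it implicitly in the same way.
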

\begin{proof} Let $(\xi_{1},\xi_{2},\ldots,\xi_{n}) \in \ker(e_{1} T_{1} + e_{2} T_{2})$. Then $\left(e_{1} T_{1} + e_{2} T_{2}\right)(\xi_{1}, \xi_{2},\ldots,\xi_{n}) = 0$. Using definition \ref{product} and from the part (1) of proposition \ref{properties}, it will follows that 
\begin{eqnarray*}
\left(e_{1} T_{1} + e_{2} T_{2}\right)(\xi_{1}, \xi_{2},\ldots,\xi_{n}) &=& e_1\cdot {T}_{1} (\xi_{1}^-, \xi_{2}^-,\ldots,\xi_{n}^-) + e_{2} \cdot {T}_{2} (\xi_{1}^+, \xi_{2}^+,\ldots,\xi_{n}^+) = 0\\
\Leftrightarrow \quad {T}_{1} (\xi_{1}^-, \xi_{2}^-,\ldots,\xi_{n}^-) = 0 &{\&}& {T}_{2} (\xi_{1}^+, \xi_{2}^+,\ldots,\xi_{n}^+) = 0\\
\Leftrightarrow \quad (\xi_{1}^-, \xi_{2}^-,\ldots,\xi_{n}^-) \in \ker {T}_{1} &{\&}& (\xi_{1}^+, \xi_{2}^+,\ldots,\xi_{n}^+)\in \ker {T}_{2}\\
\Leftrightarrow \quad (\xi_{1},\xi_{2},\ldots,\xi_{n}) \in && \ker T_{1} \times_{e} \ker T_{2}.
\end{eqnarray*}
For the secondly part, consider $(\xi_{1},\xi_{2},\ldots,\xi_{m}) \in \ran T_{1} \times_{e} \ran T_{2}$, then we have
\begin{eqnarray*}
(\xi_{1}^-, \xi_{2}^-,\ldots,\xi_{m}^-) \in \ran T_{1} &and& (\xi_{1}^+, \xi_{2}^+,\ldots,\xi_{m}^+) \in \ran T_{2}\\
\Rightarrow  \quad  \exists \quad  (z_1,z_2,\ldots,z_n)&,& (w_1,w_2,\ldots,w_n) \in \C_1^n  \\
\mbox{such that}\quad 
T_1(z_1,z_2,\ldots,z_n) = (\xi_{1}^-, \xi_{2}^-,\ldots,\xi_{m}^-) &and& T_2(w_1,w_2,\ldots,w_n)= (\xi_{1}^+, \xi_{2}^+,\ldots,\xi_{m}^+)\\
\mbox{So}, \quad   
e_1 (\xi_{1}^-, \xi_{2}^-,\ldots,\xi_{m}^-) + e_2 (\xi_{1}^+, \xi_{2}^+,\ldots,\xi_{m}^+) &\in&  \ran (e_{1} T_{1} + e_{2}  T_{2})\\
\mbox{Hence}  \quad \ran T_1 \times_e \ran T_2 &\subseteq& \ran (e_{1} T_{1} + e_{2}  T_{2}).
\end{eqnarray*}
 Conversely  suppose $e_1 T_1(\xi_{1}^-, \xi_{2}^-,\ldots,\xi_{n}^-) + e_2 T_2(\xi_{1}^+, \xi_{2}^+,\ldots,\xi_{n}^+) \in  \ran (e_{1} T_{1} + e_{2}  T_{2})$, for some $(\xi_{1},\xi_{2},\ldots,\xi_{n})  \in \C_2^n $. This follows that $ \ran (e_{1} T_{1} + e_{2}  T_{2}) \subseteq \ran T_1 \times_e \ran T_2$. 
 Hence, the equality holds.
\end{proof}

\noindent Next result exhibits a relation between the rank of\; $T\in L_1^{nm} \times L_1^{nm}$ with the ranks of its components.

\begin{theorem}\label{rang}
Let $T\in L_1^{nm} \times L_1^{nm}$ so that $T=e_1 T_1 + e_2 T_2$ for some $T_1,T_2\in L_1^{nm}$. Then, 
\begin{eqnarray*}
\dim \ran{T} = \dim \ran {T_1} + \dim \ran {T_2}. 
\end{eqnarray*}
In other words, that rank of $T$ is sum of the ranks of its own components.
\end{theorem}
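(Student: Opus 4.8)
The plan is to use Theorem~\ref{Kernel}(2), which already identifies $\ran T = \ran T_1 \times_e \ran T_2$, and reduce the dimension count to a statement purely about the idempotent product of two complex subspaces. Concretely, set $U_1 = \ran T_1 \subseteq \C_1^m$ and $U_2 = \ran T_2 \subseteq \C_1^m$; both are ordinary complex vector spaces, and $\ran T = U_1 \times_e U_2 = \{e_1 x + e_2 y : x \in U_1,\, y \in U_2\} \subseteq \C_2^m$. So it suffices to prove the general fact that, for complex subspaces $U_1, U_2$ of $\C_1^m$, the $\C_1$-dimension of $U_1 \times_e U_2$ equals $\dim_{\C_1} U_1 + \dim_{\C_1} U_2$.

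First I would establish that $U_1 \times_e U_2$ is a $\C_1$-subspace of $\C_2^m$ (closure under addition and $\C_1$-scalar multiplication is immediate from the definition, exactly as in Proposition~\ref{Proposition1}). Then I would exhibit an explicit spanning set and prove independence. Choose a $\C_1$-basis $\{x_1,\dots,x_r\}$ of $U_1$ and a $\C_1$-basis $\{y_1,\dots,y_s\}$ of $U_2$, where $r = \dim \ran T_1$, $s = \dim \ran T_2$. I claim $\mathcal{C} = \{e_1 x_i : 1 \le i \le r\} \cup \{e_2 y_j : 1 \le j \le s\}$ is a $\C_1$-basis of $U_1 \times_e U_2$. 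Spanning is clear: any $e_1 x + e_2 y$ with $x = \sum \alpha_i x_i$, $y = \sum \beta_j y_j$ equals $\sum \alpha_i (e_1 x_i) + \sum \beta_j (e_2 y_j)$. For independence, suppose $\sum_i \alpha_i (e_1 x_i) + \sum_j \beta_j (e_2 y_j) = 0$ in $\C_2^m$, i.e. $e_1\big(\sum_i \alpha_i x_i\big) + e_2\big(\sum_j \beta_j y_j\big) = 0$. By uniqueness of the idempotent representation of elements of $\C_2^m$ (componentwise, this is the comparison rule in Remark following the Cartesian product definition), both idempotent components vanish: $\sum_i \alpha_i x_i = 0$ and $\sum_j \beta_j y_j = 0$. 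Linear independence of the chosen bases then forces all $\alpha_i = 0$ and all $\beta_j = 0$. Hence $|\mathcal{C}| = r + s$ counts a basis, giving $\dim \ran T = \dim \ran T_1 + \dim \ran T_2$.

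The only mildly delicate point — and the one I would state carefully rather than wave at — is the uniqueness of the idempotent decomposition at the level of $\C_2^m$ (or $\C_1^m \times_e \C_1^m$): that $e_1 a + e_2 b = 0$ with $a, b \in \C_1^m$ forces $a = b = 0$. This follows by multiplying through by $e_1$ and $e_2$ respectively and using $e_1^2 = e_1$, $e_2^2 = e_2$, $e_1 e_2 = 0$, together with the fact that for $a \in \C_1^m$ the equation $e_1 a = 0$ in $\C_2^m$ implies $a = 0$ (look at any nonzero coordinate: $e_1 \cdot \alpha = 0$ in $\C_2$ with $\alpha \in \C_1$ forces $\alpha = 0$, since $e_1$ is not a zero-divisor against elements of $\C_1$). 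Everything else is routine bookkeeping. I would present the argument in this order: invoke Theorem~\ref{Kernel}(2); reduce to the subspace-dimension lemma for $\times_e$; prove that lemma via the explicit basis $\mathcal{C}$; and conclude.
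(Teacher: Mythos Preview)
Your proposal is correct and follows essentially the same route as the paper: both invoke Theorem~\ref{Kernel}(2) to write $\ran T = \ran T_1 \times_e \ran T_2$, pick bases $\{x_i\}$ and $\{y_j\}$ of $\ran T_1$ and $\ran T_2$, and verify that $\{e_1 x_i\}\cup\{e_2 y_j\}$ is a $\C_1$-basis of $\ran T$ by checking spanning and then independence via the uniqueness of the idempotent decomposition. Your write-up is in fact a bit more careful than the paper's in isolating and justifying the step $e_1 a + e_2 b = 0 \Rightarrow a = b = 0$ for $a,b\in\C_1^m$.
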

\begin{proof} Let $\mathcal{B}_1  =\{z_i: i = =1,2,\ldots,p \}, \mathcal{B}_2 = \{w_j: j = 1,2,\ldots,q \}$ be the bases for $\ran{T_1}$ , $\ran{T_2}$ respectively.
Consider the collection $\mathcal{B}=\{e_1 z_i,e_2 w_j\}$ of \;$p+q$ \;elements of $\C_2$. Notice that $ <\mathcal{B}> \quad \subseteq \ran{T}$. If we take $(\xi_{1},\xi_{2},\ldots,\xi_{m}) \in\ran{T}$, then from theorem \ref{Kernel}, it follows that \begin{eqnarray*}
(\xi_{1}^-, \xi_{2}^-,\ldots,\xi_{m}^-) \in \ran T_{1} &\mbox{and}& (\xi_{1}^+, \xi_{2}^+,\ldots,\xi_{m}^+) \in \ran T_{2}\\
\Rightarrow \quad (\xi_{1}^-, \xi_{2}^-,\ldots,\xi_{m}^-) = \sum_{i=1}^{p} \alpha_i  z_i  &\mbox{and}& (\xi_{1}^+,\xi_{2}^+,\ldots,\xi_{m}^+) = \sum_{j=1}^{q} \beta_j  w_j , \quad \mbox{for some} \quad \alpha_i, \beta_j \in \C_1\\
\Rightarrow \quad (\xi_{1},\xi_{2},\ldots,\xi_{m}) &=& e_1 \sum_{i=1}^{p} \alpha_i  z_i + e_2 \sum_{j=1}^{q} \beta_j  w_j\\
 &=&  \sum_{i=1}^{p} \alpha_i(e_1  z_i) +  \sum_{j=1}^{q} \beta_j  (e_2 w_j).
\end{eqnarray*}
This implies $(\xi_{1},\xi_{2},\ldots,\xi_{m}) \in \; <\mathcal{B}>$. We thus have $\ran{T} = <\mathcal{B}>.$
Furthermore, if we consider
\begin{eqnarray*}
\sum_{i=1}^{p} \alpha_i (e_1 z_i) &+&  \sum_{j=1}^{q} \beta_j (e_2 w_j) = 0\\ 
\Rightarrow \quad e_1 \left(\sum_{i=1}^{p} \alpha_i z_i\right) &+& e_2 \left(\sum_{j=1}^{q} \beta_j w_j\right) = 0\\
\Rightarrow \quad  \sum_{i=1}^{p} \alpha_i z_i = 0 &\mbox{and}& \sum_{j=1}^{q} \beta_j w_j = 0 \\
\Rightarrow \quad \alpha_i = 0 = \beta_j, && \forall \  i , j.
\end{eqnarray*}
This forces $\mathcal{B}$ to be linearly independent and hence
$\mathcal{B}$ becomes a basis for $\ran{T}$. As we know $\rank T=\dim \ran{T}$, this implies
\begin{eqnarray*}
\rank {T} &=& \mbox{no. of elements of set}\ \mathcal{B} = p + q \\
\Rightarrow \quad \rank {T} &=& \dim \ran T_1 \ + \ \dim \ran T_2\\
&=& \rank \ T_1 \ + \ \rank \ T_2. 
\end{eqnarray*}
\end{proof}
\noindent Using rank-nullity theorem, one can deduce the same analogous result for nullity of linear map $T=e_1 T_1 + e_2 T_2$.. 
\begin{corollary}
For any $T=T=e_1 T_1 + e_2 T_2\in L_1^{nm} \times L_1^{nm}$, we have  
\begin{eqnarray*}
\dim \ker T &=& \dim \ker T_1 + \dim \ker T_2.
\end{eqnarray*}
Hence, in other words, nullity of $T$ is sum of nullity of its components.
\end{corollary}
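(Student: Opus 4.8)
The plan is to reduce everything to the rank statement just proved in Theorem \ref{rang} together with the ordinary rank--nullity theorem applied on the complex vector spaces $\C_1^n$ and the bicomplex space $\C_2^n$ viewed as a $\C_1$-vector space. First I would record the three instances of rank--nullity that are in play: for $T_1 \colon \C_1^n \to \C_1^m$ one has $\dim \ker T_1 + \rank T_1 = n$; likewise $\dim \ker T_2 + \rank T_2 = n$; and for $T \in L_1^{nm}\times_e L_1^{nm} \subseteq L_2^{nm}$, regarded as a $\C_1$-linear map on the $2n$-dimensional $\C_1$-space $\C_2^n$ (as established in Remark \ref{rem1} and the discussion of $\dim(\C_2^n)=2n$), one has $\dim \ker T + \rank T = 2n$.

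Next I would simply combine these. From Theorem \ref{rang}, $\rank T = \rank T_1 + \rank T_2$, so
\begin{eqnarray*}
\dim \ker T &=& 2n - \rank T \\
&=& 2n - \rank T_1 - \rank T_2 \\
&=& (n - \rank T_1) + (n - \rank T_2) \\
&=& \dim \ker T_1 + \dim \ker T_2,
\end{eqnarray*}
which is exactly the asserted identity. I would also remark that the same conclusion can be obtained more directly and self-containedly from part (1) of Theorem \ref{Kernel}, namely $\ker T = \ker T_1 \times_e \ker T_2$: an argument identical in form to the basis-counting argument used in the proof of Theorem \ref{rang} (take bases of $\ker T_1$ and $\ker T_2$, push them into $\C_2^n$ via $e_1$ and $e_2$, check spanning and independence using property (1) of Theorem \ref{properties}) shows $\dim(\ker T_1 \times_e \ker T_2) = \dim \ker T_1 + \dim \ker T_2$. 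Either route closes the proof; I would present the rank--nullity route as the main line since it is shortest and the other as a parenthetical alternative.

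There is essentially no serious obstacle here: the corollary is a formal consequence of Theorem \ref{rang}. The only point requiring a moment of care is making sure the ``ambient'' dimension used in rank--nullity for $T$ is $2n$ and not $n$ --- that is, remembering that $T$ acts on $\C_2^n$, whose $\C_1$-dimension is $2n$ --- and correspondingly that $\rank T$ here means $\dim_{\C_1}\ran T$, consistent with how $\rank$ was used in Theorem \ref{rang}. Once the bookkeeping of which dimension is which is set straight, the computation displayed above is the whole proof.
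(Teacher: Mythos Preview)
Your proof is correct and follows exactly the route the paper indicates: apply the rank--nullity theorem to $T$ (on the $2n$-dimensional $\C_1$-space $\C_2^n$) and to each $T_i$ (on $\C_1^n$), then invoke Theorem \ref{rang}. The paper gives no further details beyond that one-line hint, so your written-out computation and your care about the ambient dimension $2n$ are entirely in line with what is intended.
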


\noindent {\bf Invertible and non-singular linear maps:} We now examine the invertibility and non-singularity of linear maps of the space $L_1^{nm}\times_e L_1^{nm}$. For article to be self-contained, these notions are first defined here.
\begin{definition}
Let $V,W$ be any two finite dimensional vector spaces over the same field. A linear map $T \colon V \to W$ is said to be 
\begin{itemize}
\item {\it invertible} if there exists a linear map $S \colon W \to V$ such that $S\circ T = I_{V}$ and $T\circ S = I_{W}$. In other words, $T$ is invertible if and only if  $T$ is bijective. 
\item {\it non-singular }if $\ker T = \{ 0 \}$, equivalently if $T$ is injective.
\end{itemize} 
\end{definition} 
\begin{remark}
For invertible linear map $T$,  the linear map $S$ given in the above definition is called the inverse of $T$.  In case the inverse exists it is unique, so we can denote it by $T^{-1}$. Also notice that an invertible linear map is an isomorphism so that the dimensions of $V$ and $W$ must be same, i.e., $\dim V=\dim W$.
\end{remark}
The following theorem asserts that idempotent product behaves well with respect to these notions.

\begin{theorem} \label{theorem3.6}
Let $T=e_{1} T_{1} + e_{2} T_{2}$ be an element of $ L_1^{nm}\times_e L_1^{nm}$. Then, we have
\begin{enumerate}
\item $T$ is a invertible if and only if both $T_{1},T_{2}$ are invertible. Further in this case, we have following 
\[
(e_{1} T_{1} + e_{2} T_{2})^{-1} = e_{1} T_{1}^{-1} + e_{2} T_{2}^{-1}.
\]
\item $T$ is non-singular if and only if both $T_{1},T_{2}$ are non-singular.
\end{enumerate}
\end{theorem}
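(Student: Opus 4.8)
The plan is to treat the two directions of part (1) separately, reduce part (2) directly to Theorem~\ref{Kernel}, and use as the one structural input the observation that the identity map decomposes in the idempotent product: evaluating Definition~\ref{product} on $T_1=T_2=I$ gives $I_{\C_2^n}=e_1 I_{\C_1^n}+e_2 I_{\C_1^n}$, so $I_{\C_2^n}\in L_1^{nn}\times_e L_1^{nn}$. This is precisely what lets us compare identities componentwise through Theorem~\ref{properties}.

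For the ``if'' direction of (1), assume $T_1,T_2$ are invertible (note this already forces $m=n$, since an invertible $\C_1$-linear map is an isomorphism). Set $S\coloneqq e_1 T_1^{-1}+e_2 T_2^{-1}\in L_1^{mn}\times_e L_1^{mn}$. Applying the composition rule of Theorem~\ref{properties}(3) (both $S\circ T$ and $T\circ S$ are defined), I would compute
$S\circ T = e_1(T_1^{-1}\circ T_1)+e_2(T_2^{-1}\circ T_2)=e_1 I_{\C_1^n}+e_2 I_{\C_1^n}=I_{\C_2^n}$, and symmetrically $T\circ S=I_{\C_2^m}$. Hence $T$ is invertible, and by uniqueness of the inverse, $T^{-1}=S=e_1 T_1^{-1}+e_2 T_2^{-1}$, which is the claimed formula.

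For the ``only if'' direction of (1), assume $T$ is invertible. Since $T$ is then a $\C_1$-linear isomorphism, $\dim\C_2^n=\dim\C_2^m$, i.e. $2n=2m$, so $m=n$. Because $T$ is injective, $\ker T=\{0\}$; by Theorem~\ref{Kernel}(1), $\ker T_1\times_e\ker T_2=\{0\}$, and since an element $e_1 z+e_2 w$ of $\C_2^n$ vanishes if and only if $z=0$ and $w=0$ (uniqueness of the idempotent representation), together with $0\in\ker T_i$, we conclude $\ker T_1=\{0\}=\ker T_2$. Thus $T_1,T_2$ are injective $\C_1$-linear maps $\C_1^n\to\C_1^n$, hence bijective by the rank--nullity theorem, hence invertible; the formula for $T^{-1}$ now follows from the ``if'' direction already established. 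Finally, part (2): $T$ is non-singular $\iff \ker T=\{0\}\iff \ker T_1\times_e\ker T_2=\{0\}$ by Theorem~\ref{Kernel}(1) $\iff \ker T_1=\{0\}=\ker T_2\iff$ both $T_1,T_2$ are non-singular, by the same elementary fact about the idempotent decomposition.

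The only point I would state with care — and the main (mild) obstacle — is the step from $\ker T_1\times_e\ker T_2=\{0\}$ to $\ker T_1=\{0\}$ and $\ker T_2=\{0\}$: it rests on both the uniqueness of the idempotent representation of elements of $\C_2^n$ and the trivial remark that $0$ lies in every kernel. Everything else is bookkeeping with the componentwise rules of Theorem~\ref{properties} and Theorem~\ref{Kernel}. As an alternative to the rank--nullity step one could invoke Theorem~\ref{rang}: $\rank T_1+\rank T_2=\rank T=2n$ while $\rank T_i\le n$, forcing $\rank T_i=n$ and hence invertibility of each $T_i$.
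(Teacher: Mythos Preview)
Your proof is correct, and for part~(2) it is essentially identical to the paper's argument via Theorem~\ref{Kernel}.

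For part~(1), however, you take a genuinely different route in the ``only if'' direction. The paper argues both directions simultaneously: it supposes $T$ invertible, \emph{writes the inverse as} $S=e_1S_1+e_2S_2$, and then uses the composition rule of Theorem~\ref{properties}(3) together with $I_{\C_2^n}=e_1 I_{\C_1^n}+e_2 I_{\C_1^n}$ and Theorem~\ref{properties}(2) to obtain the chain of equivalences $S\circ T=T\circ S=I_{\C_2^n}\Leftrightarrow S_i\circ T_i=T_i\circ S_i=I_{\C_1^n}$. You instead go through injectivity: from $\ker T=\{0\}$ and Theorem~\ref{Kernel}(1) you get $\ker T_i=\{0\}$, and then rank--nullity (or, as you note, Theorem~\ref{rang}) yields invertibility of each $T_i$. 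What this buys you is that you never need to assume the inverse $S$ lies in $L_1^{mn}\times_e L_1^{mn}$; since the paper itself stresses that $L_1^{nm}\times_e L_1^{nm}$ is a proper subspace of $L_2^{nm}$ (Remark~\ref{remark}), the paper's step ``there exists a linear map $S=e_1S_1+e_2S_2$'' is, strictly speaking, an unjustified assumption that your argument sidesteps. The paper's approach is shorter and more symmetric once that assumption is granted; yours is more self-contained.
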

\begin{proof} Suppose $T=e_{1} T_{1} + e_{2} T_{2}$ is invertible. Then there exists a linear map \ $S=e_{1} S_{1} + e_{2} S_{2}$ such that
\begin{eqnarray*}
S\circ T = T\circ S &=&  I_{\C_{2}^{n}} \\
\Leftrightarrow \quad S_{1}\circ T_{1} = T_{1}\circ S_{1} = I_{\C_{1}^{n}} &\mbox{and}&  S_{2}\circ T_{2} = T_{2}\circ S_{2} =I_{\C_{1}^{n}} \\
\Leftrightarrow \quad S_{1} \ \mbox{is the inverse of} \ T_{1} \ &\mbox{and}& \ S_{2} \ \mbox{is the inverse of} \ T_{2}\\
\Leftrightarrow \quad T_{1}\;\mbox{and}\; T_{2}\; \mbox{are invertible}.&&
\end{eqnarray*}
Clearly, $S=T^{-1}$ if and only if $S_1=T_1^{-1}$ and $S_2=T_2^{-1}$. This implies that $(e_{1} T_{1} + e_{2} T_{2})^{-1} = e_{1} T_{1}^{-1} + e_{2} T_{2}^{-1}.$  Now for the second part, we first assume that $e_{1} T_{1} + e_{2} T_{2}$ is non-singular. Then, by using theorem \ref{Kernel}, we have
\begin{eqnarray*}
&\Leftrightarrow& \ker(e_{1} T_{1} + e_{2} T_{2}) = \{0 \}\\
&\Leftrightarrow& \ker(T_{1}) = \{0 \},\; \ker(T_{2}) = \{0 \}\\
&\Leftrightarrow& T_{1}, T_{2} \;\; \mbox{are non-singular}.
\end{eqnarray*}
This completes the proof.
\end{proof}
\noindent The following theorem generalizes the result of $C_1$-linear maps to the elements of $L_1^{nm}\times_e L_1^{nm}$. 
\begin{theorem} \label{theorem3.8}
 Let $T=e_{1} T_{1} + e_{2} T_{2} \colon \C_2^{n} \to \C_2^{m},\  S=e_{1} S_{1} + e_{2} S_{2} \colon \C_2^{m} \to \C_2^{k}$ be the linear maps. Suppose that ${\mathcal{B}_{1}}, {\mathcal{B}_{2}}$ and ${\mathcal{B}_{3}}$ are the ordered bases for $\C_1^{n}, \C_1^{m}$ and $\C_1^{k}$ respectively. Then, we have
 \[
 \left[ S\circ T \right]^{\mathcal{B}_{1}}_{\mathcal{B}_{3}} = 
 [S]^{\mathcal{B}_{2}}_{\mathcal{B}_{3}}\cdot[T]^{\mathcal{B}_{1}}_{\mathcal{B}_{2}}.
 \]
 \end{theorem}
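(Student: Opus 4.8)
The plan is to reduce the identity for bicomplex linear maps to the classical matrix-composition formula for $\C_1$-linear maps applied to the idempotent components. First I would recall that by Theorem~\ref{properties}(3) the composition $S\circ T$ again lies in $L_1^{kn}\times_e L_1^{kn}$ and decomposes as $S\circ T = e_1(S_1\circ T_1) + e_2(S_2\circ T_2)$, where $S_i\circ T_i \colon \C_1^{n}\to\C_1^{k}$ are the ordinary composites of the components. This is the crucial structural input: it says the idempotent decomposition is multiplicative, so the whole problem splits into two independent complex-linear problems.

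Next I would apply Definition~\ref{methodl} to the element $S\circ T$ of $L_1^{kn}\times_e L_1^{kn}$, using the bases $\mathcal{B}_1$ for $\C_1^n$ and $\mathcal{B}_3$ for $\C_1^k$, to write
\[
\left[S\circ T\right]^{\mathcal{B}_1}_{\mathcal{B}_3} = e_1\left[S_1\circ T_1\right]^{\mathcal{B}_1}_{\mathcal{B}_3} + e_2\left[S_2\circ T_2\right]^{\mathcal{B}_1}_{\mathcal{B}_3}.
\]
Then I would invoke the classical fact (stated at the opening of Section~4) that for $\C_1$-linear maps the matrix of a composite is the product of the matrices with respect to the appropriate intermediate basis; thus $\left[S_i\circ T_i\right]^{\mathcal{B}_1}_{\mathcal{B}_3} = [S_i]^{\mathcal{B}_2}_{\mathcal{B}_3}\cdot[T_i]^{\mathcal{B}_1}_{\mathcal{B}_2}$ for $i=1,2$. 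Substituting gives $\left[S\circ T\right]^{\mathcal{B}_1}_{\mathcal{B}_3} = e_1\,[S_1]^{\mathcal{B}_2}_{\mathcal{B}_3}[T_1]^{\mathcal{B}_1}_{\mathcal{B}_2} + e_2\,[S_2]^{\mathcal{B}_2}_{\mathcal{B}_3}[T_2]^{\mathcal{B}_1}_{\mathcal{B}_2}$.

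Finally I would expand the right-hand side of the claimed identity using Definition~\ref{methodl} for each of $[S]^{\mathcal{B}_2}_{\mathcal{B}_3}$ and $[T]^{\mathcal{B}_1}_{\mathcal{B}_2}$, namely $[S]^{\mathcal{B}_2}_{\mathcal{B}_3} = e_1[S_1]^{\mathcal{B}_2}_{\mathcal{B}_3} + e_2[S_2]^{\mathcal{B}_2}_{\mathcal{B}_3}$ and similarly for $T$, and then multiply the two bicomplex matrices. Here the key simplification is the orthogonality relations $e_1^2=e_1$, $e_2^2=e_2$, $e_1e_2=e_2e_1=0$, which annihilate the cross terms and leave exactly $e_1[S_1]^{\mathcal{B}_2}_{\mathcal{B}_3}[T_1]^{\mathcal{B}_1}_{\mathcal{B}_2} + e_2[S_2]^{\mathcal{B}_2}_{\mathcal{B}_3}[T_2]^{\mathcal{B}_1}_{\mathcal{B}_2}$, matching the expression obtained above. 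I do not anticipate a genuine obstacle; the only point requiring a little care is to be explicit that matrix multiplication of bicomplex matrices respects the idempotent decomposition entrywise (this is the matrix analogue of Remark~1 on products of bicomplex numbers, and also follows from the identification $\C_2^{m\times n} = \C_1^{m\times n}\times_e\C_1^{m\times n}$), so that the cross terms really do vanish. Once that is noted, the two sides visibly agree.
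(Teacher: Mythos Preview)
Your proposal is correct and follows essentially the same approach as the paper's proof: decompose $S\circ T$ via Theorem~\ref{properties}(3), apply Definition~\ref{methodl}, invoke the classical matrix-of-composite formula componentwise, and then use the idempotent orthogonality relations $e_1e_2=0$, $e_i^2=e_i$ to identify the result with $[S]^{\mathcal{B}_2}_{\mathcal{B}_3}\cdot[T]^{\mathcal{B}_1}_{\mathcal{B}_2}$. The only (harmless) slip is notational: $S\circ T$ lies in $L_1^{nk}\times_e L_1^{nk}$, not $L_1^{kn}\times_e L_1^{kn}$, under the paper's convention.
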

\begin{proof}
By using theorem \ref{properties} and definition \ref{methodl}, it follows
\begin{eqnarray*}
\left[ S\circ T \right]^{\mathcal{B}_{1}}_{\mathcal{B}_{3}} &=& [(e_{1} S_{1} + e_{2} S_{2}) \circ (e_{1} T_{1} + e_{2} T_{2})]^{\mathcal{B}_{1}}_{\mathcal{B}_{3}}\\
&=& [e_{1} (S_{1}\circ T_{1}) + e_{2} (S_{2} \circ T_{2})]^{\mathcal{B}_{1}}_{\mathcal{B}_{3}}  \\
 &=& e_{1} [S_{1} \circ T_{1}]^{\mathcal{B}_{1}}_{\mathcal{B}_{3}} + e_{2} [S_{2}\circ T_{2}]^{\mathcal{B}_{1}}_{\mathcal{B}_{3}}  \\
 &=& e_{1} [S_{1}]^{\mathcal{B}_{2}}_{\mathcal{B}_{3}} \cdot  [T_{1}]^{\mathcal{B}_{1}}_{\mathcal{B}_{2}} + e_{2} [S_{2}]^{\mathcal{B}_{2}}_{\mathcal{B}_{3}} \cdot [T_{2}]^{\mathcal{B}_{1}}_{\mathcal{B}_{3}} \\
&=&  (e_{1} [S_{1}]^{\mathcal{B}_{2}}_{\mathcal{B}_{3}} + e_{2} [S_{2}]^{\mathcal{B}_{2}}_{\mathcal{B}_{3}})  \cdot (e_{1} [T_{1}]^{\mathcal{B}_{1}}_{\mathcal{B}_{2}} + e_{2} [T_{2}]^{\mathcal{B}_{1}}_{\mathcal{B}_{2}})   \\
&=& [e_{1}  S_{1} + e_{2} S_{2}]^{\mathcal{B}_{2}}_{\mathcal{B}_{3}}  \ \cdot [e_{1}T_{1} + e_{2} T_{2}]^{\mathcal{B}_{1}}_{\mathcal{B}_{2}} \\ 
&=& [S]^{\mathcal{B}_{2}}_{\mathcal{B}_{3}}\cdot[T]^{\mathcal{B}_{1}}_{\mathcal{B}_{2}}.
\end{eqnarray*}
This completes the theorem.
\end{proof}

\begin{remark}\label{specialcase2} As a special case in above theorem, if we take $\C_2^{m} = \C_2^{n} = \C_2^{k}$ and $\mathcal B$ as a basis, then we get
\begin{eqnarray}
[S \circ T]_{\mathcal{B}} = [S]_{\mathcal{B}} \cdot [T]_{\mathcal{B}}. 
\end{eqnarray}
\end{remark}

\noindent {\bf Invertible and non-singular bicomplex matrices:} We now discuss the invertibility and non-singularity of bicomplex matrix of the space $\C_2^{n\times n}$. First we define them here.
\begin{definition}
A bicomplex square matrix $A\in\C_2^{n\times n}$ is said to be 
\begin{itemize}
\item {\it invertible} if there exists a matrix $B \in \C_2^{n\times n}$ such that $A\cdot B = B\cdot A=I_n$. 
\item {\it non-singular} if $\det A$ is a non-singular element of $\C_2$. 
\end{itemize} 
\end{definition}  
\begin{remark}\label{remark3} Every square matrix $A\in \C_2^{n\times n}$, like bicomplex number, can also be written uniquely as $A=e_1A_1+e_2A_2$, where $A_1,A_2\in \C_1^{n\times n}$ are complex square matrices. With this representation of bicomplex matrices, we have (cf. \cite[exercise 6.8]{price2018})
\begin{enumerate}
\item $A$ is invertible if and only if $A_1,A_2$ are invertible.
\item $A$ is non-singular if and only if $A_1,A_2$ are non-singular.
\end{enumerate}
\end{remark}

\begin{theorem}
Let $T\in L_1^{nm}\times_e L_1^{nm}$ and $\mathcal B$ be a basis for $\C_1^n$. Then, the linear map $T$ is invertible if and only if the matrix\; $[T]_{\mathcal{B}}$\; is invertible.
\end{theorem}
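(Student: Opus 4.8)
**

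The plan is to reduce everything to facts already established in the excerpt, chaining together Theorem~\ref{theorem3.6}, the matrix representation formula from Definition~\ref{methodl}, and Remark~\ref{remark3}. Write $T = e_1 T_1 + e_2 T_2$ with $T_1, T_2 \in L_1^{nn}$ (note that invertibility forces $n = m$, since an invertible linear map is an isomorphism, so we are genuinely in the square case and $[T]_{\mathcal B}$ makes sense as an element of $\C_2^{n\times n}$). By Definition~\ref{methodl} in its special-case form~\eqref{matrixrepre}, $[T]_{\mathcal B} = e_1 [T_1]_{\mathcal B} + e_2 [T_2]_{\mathcal B}$, and this is precisely the canonical idempotent decomposition $A = e_1 A_1 + e_2 A_2$ of the bicomplex matrix $A := [T]_{\mathcal B}$, with $A_1 = [T_1]_{\mathcal B}$ and $A_2 = [T_2]_{\mathcal B}$.

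First I would record the classical scalar fact: for a $\C_1$-linear operator on a finite-dimensional space, $T_k$ is invertible if and only if its matrix $[T_k]_{\mathcal B}$ is invertible (this is the standard isomorphism $\Hom(\C_1^n,\C_1^n)\cong \C_1^{n\times n}$ respecting composition, i.e.\ $[S_k\circ T_k]_{\mathcal B} = [S_k]_{\mathcal B}[T_k]_{\mathcal B}$ and $[I]_{\mathcal B} = I_n$). Then the chain of equivalences is: $T$ invertible $\iff$ $T_1$ and $T_2$ both invertible (Theorem~\ref{theorem3.6}(1)) $\iff$ $[T_1]_{\mathcal B}$ and $[T_2]_{\mathcal B}$ both invertible (the scalar fact) $\iff$ $A_1$ and $A_2$ both invertible $\iff$ $A = [T]_{\mathcal B}$ invertible (Remark~\ref{remark3}(1)). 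That closes the loop.

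Alternatively, and perhaps more cleanly, one can give a direct argument using Remark~\ref{specialcase2}: if $T$ is invertible with inverse $S = e_1 T_1^{-1} + e_2 T_2^{-1}$ (the explicit form from Theorem~\ref{theorem3.6}(1)), then applying $[\,\cdot\,]_{\mathcal B}$ to $S\circ T = T\circ S = I_{\C_2^n}$ and using $[S\circ T]_{\mathcal B} = [S]_{\mathcal B}[T]_{\mathcal B}$ together with $[I_{\C_2^n}]_{\mathcal B} = I_n$ gives $[S]_{\mathcal B}[T]_{\mathcal B} = [T]_{\mathcal B}[S]_{\mathcal B} = I_n$, so $[T]_{\mathcal B}$ is invertible with inverse $[S]_{\mathcal B}$. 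Conversely, if $[T]_{\mathcal B}$ is invertible, its inverse $B \in \C_2^{n\times n}$ is the matrix of some linear map $S \in L_1^{nn}\times_e L_1^{nn}$ (the space $L_1^{nn}\times_e L_1^{nn}$ is isomorphic to $\C_2^{n\times n}$ via $[\,\cdot\,]_{\mathcal B}$, by the isomorphism theorem proved just above), and $[S\circ T]_{\mathcal B} = B[T]_{\mathcal B} = I_n = [I_{\C_2^n}]_{\mathcal B}$ forces $S\circ T = I_{\C_2^n}$ since the representation map is injective; similarly $T\circ S = I_{\C_2^n}$, so $T$ is invertible. I would present this second argument as the main proof, since it parallels the familiar complex-linear-algebra proof and uses only results already in hand.

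The only mild obstacle is bookkeeping: one must be careful that all the representation maps are with respect to the \emph{same} basis $\mathcal B$ on domain and codomain (legitimate here because $n = m$), and that the isomorphism $L_1^{nn}\times_e L_1^{nn} \cong \C_2^{n\times n}$ is used in the right direction when lifting the inverse matrix $B$ back to a linear map. Neither point is deep; the substance of the theorem is entirely carried by Theorem~\ref{theorem3.6} and Remark~\ref{remark3}, with Remark~\ref{specialcase2} supplying the multiplicativity that glues the matrix side to the operator side.
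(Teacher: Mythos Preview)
Your proposal is correct, and in fact the paper's own proof is precisely a hybrid of the two arguments you sketch: the forward direction is your ``alternative'' composition argument via Remark~\ref{specialcase2} (applying $[\,\cdot\,]_{\mathcal B}$ to $T\circ T^{-1}=I_{\C_2^n}$), while the converse is your first chain-of-equivalences argument, passing from invertibility of $[T]_{\mathcal B}$ to invertibility of $[T_1]_{\mathcal B},[T_2]_{\mathcal B}$ via Remark~\ref{remark3}(1), then to invertibility of $T_1,T_2$, and finally of $T$ via Theorem~\ref{theorem3.6}(1). So either of your two routes, or the paper's mix of them, works equally well.
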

\begin{proof}
Let $T = e_{1} T_{1} + e_{2}T_2$ is invertible. This implies that $T^{-1}$ exists. From Part (1) of theorem \ref{theorem3.6}, it follows that \;$T^{-1} =  e_{1} T_{1}^{-1} + e_{2} T_{2}^{-1}$.  We thus have
\begin{eqnarray*}
T \circ T^{-1} &=&  I_{\C_{2}^{n}}\\
\therefore \quad [T \circ T^{-1}]_\mathcal{B} &=&  I_n\\
{[T]}_{\mathcal B} \cdot [T^{-1}]_{\mathcal B} &=&  I_n \quad\mbox{(from remark \ref{specialcase2})}.
\end{eqnarray*}
Hence, $[T]_\mathcal{B}$ is invertible. \\
Conversely suppose $[T]_\mathcal{B}$ is invertible. By using part (1) of remark \ref{remark3},  we have  
\begin{eqnarray*}
&&[T_1]_\mathcal{B} , [T_2]_\mathcal{B} \ \mbox{are invertible}\\
&\Rightarrow& \quad T_1 , T_2 \  \mbox{are invertible}\\
&\Rightarrow& \quad T \ \mbox{is invertible}.
\end{eqnarray*}
This completes the theorem.
\end{proof}

\bibliographystyle{amsplain}
\bibliography{references}
\end{document}